
\documentclass[11pt]{article}
\usepackage[a4paper,left=3cm, right=3cm,top=3.5cm, bottom=4.5cm]{geometry}
\usepackage{amsmath,amssymb,amsfonts}
\usepackage{epsfig}
\usepackage{tikz}
\usepackage{graphics}
\usepackage{bbm}
\usepackage{bm, bbm}
\usepackage{algorithm}
\usepackage{algpseudocode}
\usepackage{todonotes}
\usepackage{complexity}
\usepackage[thmmarks]{ntheorem}
\usepackage[]{hyperref}
 \hypersetup{colorlinks=true, citecolor=black,%
                 filecolor=black,%
                 linkcolor=black,%
                 urlcolor=black,  pdftex}

\usetikzlibrary[graphs, arrows, backgrounds, intersections, positioning, fit, petri, calc, shapes, decorations.pathmorphing]
\usepgflibrary{patterns}

\definecolor{darkred}{RGB}{105,0,0}
\usepackage[retainorgcmds]{IEEEtrantools}

\makeatletter
\newtheoremstyle{prime}%
 {\item[\hskip\labelsep \theorem@headerfont ##1\ \theorem@separator]}%
{\item[\hskip\labelsep \theorem@headerfont ##1\ ##3' \theorem@separator]}
\makeatother

\makeatletter
\newtheoremstyle{proofof}
{\item[\hskip\labelsep \theorem@headerfont ##1\ \theorem@separator]}%
{\item[\hskip\labelsep \theorem@headerfont ##1\ ##3\theorem@separator]}
\makeatother

\newtheorem{theorem}{Theorem}

\newtheorem{proposition}[theorem]{Proposition}

\newtheorem{corollary}[theorem]{Corollary}

\newtheorem{claim}{Claim}

\theoremstyle{prime}



%







\def \QD1 {\hfill $\spadesuit$}

\newcommand{\case}[2]{\noindent {\bf Case #1\/:} {\it #2}}

\numberwithin{equation}{section}

\theoremstyle {nonumberplain}
\theoremseparator{:}
\theorembodyfont{\normalfont}
\theoremsymbol{\rule{1ex}{1ex}}
\newtheorem{proof}{Proof}

\theoremstyle{proofof}

\theoremsymbol{$\square$}
\newtheorem{proof2}{Proof}

\begin{document}
\title{\bf DP-Degree Colorable Hypergraphs}

\author{{{
Thomas Schweser}
\thanks{
Technische Universit\"at Ilmenau, Inst. of Math., PF 100565, D-98684 Ilmenau, Germany. E-mail
address: thomas.schweser@tu-ilmenau.de}}
}

\date{}
\maketitle

\begin{abstract}
In order to solve a question on list coloring of planar graphs, Dvo\v{r}\'{a}k and Postle introduced the concept of so called DP-coloring, thereby extending the concept of list-coloring. DP-coloring was anaylized in detail by Bernshteyn, Kostochka, and Pron for graphs and multigraphs; they characterized DP-degree colorable multigraphs and deduced a Brooks' type result from this. The characterization of the corresponding 'bad' covers was later given by Kim and Ozeki. In this paper, the concept of DP-colorings is extended to hypergraphs having multiple (hyper-)edges. We characterize the DP-degree colorable hypergraphs and, furthermore, the corresponding 'bad' covers. This gives a Brooks' type result for the DP-chromatic number of a hypergraph. In the last part, we examine DP-critical graphs and establish some basic facts on their structure as well as a Gallai-type bound on the minimum number of edges.
\end{abstract}

\noindent{\small{\bf AMS Subject Classification:} 05C15 }

\noindent{\small{\bf Keywords:} DP-coloring, Hypergraph coloring, Brooks' Theorem, List-coloring}

\section{Introduction}
\subsection{Hypergraph Basics}
A \textbf{hypergraph} is a triple $G=(V,E,i)$, whereas $V$ and $E$ are two finite sets and $i : E \to 2^V$ is a function with $|i(e)| \geq 2$ for $e \in E$ (i.e., no loops are allowed). Then, $V(G)=V$ is the \textbf{vertex set} of $G$; its elements are the \textbf{vertices} of $G$. Furthermore, $E(G)=E$ is the \textbf{edge set} of $H$; its elements are the \textbf{edges} of $H$. Lastly, the mapping $i_G=i$ is the \textbf{incidence function} and $i_G(e)$ is the set of vertices that are $\textbf{incident}$ to the edge $e$ in $G$. Two vertices $u \neq v$ from $G$ are \textbf{adjacent} if there is an edge $e \in E(H)$ with $\{u,v\} \subseteq i_G(e)$. The \textbf{empty hypergraph} is the hypergraph $G$ with $V(G)=E(G)=\varnothing$; we denote it by $G= \varnothing$. 

Let $G$ be an arbitrary hypergraph. Then, $|V(G)|=|G|$ is called \textbf{order} of $G$. An edge $e$ of $G$ is a \textbf{hyperedge} if $|i_G(e)| \geq 3$ and an \textbf{ordinary edge}, otherwise. Thus, a \textbf{graph} is a hypergraph that contains only ordinary edges. Two edges $e \neq e'$ are \textbf{parallel}, if $i_G(e)=i_G(e')$. A \textbf{simple} hypergraph is a hypergraph without parallel edges. If $G$ is a hypergraph such that there exists an edge $e \in E(G)$ with $V(G)=i_G(e)$ and $E(G)=\{e\}$, we will brievly write $G=<e>$. 

A hypergraph $G$ is a \textbf{subhypergraph} of $G$, written $G' \subseteq G$, if $V(G') \subseteq V(G), E(G') \subseteq E(G)$, and $i_{G'}=i_G|_{E(G')}$. Furthermore, $G'$ is a proper subhypergraph of $G$ if $G' \subseteq G$ and $G' \neq G$. For two subhypergraphs $G_1$ and $G_2$ of $G$, we define the \textbf{union} $G_1 \cup G_2$ and the \textbf{intersection} $G_1 \cap G_2$ as usual. Another important operation for the class of hypergraphs that will be needed in this paper is the so called \textbf{merging} operation. Given two \textbf{disjoint} hypergraphs $G_1$ and $G_2$, that is $V(G_1) \cap V(G_2)= E(G_1) \cap E(G_2) = \varnothing$, two vertices $v^j \in V(G^j) (j \in \{1,2\})$, and a vertex $v^*$ that is not contained in $V(G_1) \cup V(G_2)$, we define a new hypergraph $G$ as follows. Let $V(G)=((V(G^1) \cup V(G^2)) \setminus \{v^1,v^2\}) \cup \{v^*\}$, $E(G)=E(G^1) \cup E(G^2)$, and 
$$i_G(e)=
\begin{cases}
i_{G^j}(e) & \text{if } e \in E(G^j), v^j \not \in i_{G^j}(e) \;(j \in \{1,2\}),\\
(i_{G^j}(e) \setminus \{v^j\}) \cup \{v^*\} & \text{if  } e \in E(G^j), v^j \in i_{G^j}(e)\;(j \in \{1,2\}).
\end{cases}$$
In this case, we say that $G$ is obtained from $G^1$ and $G^2$ by merging $v^1$ and $v^2$ to $v^*$.

 Let $G$ be a hypergraph and let $X \subseteq V(G)$ be a vertex set. Then, $G[X]$ denotes the subhypergraph of $G$ \textbf{induced by} $X$, that is, $V(G[X])=X$, $E(G[X])=\{e \in E(G) ~|~ i_G(e) \subseteq X\}$, and $i_{G[X]}=i_G|_{E(G[X])}$. Moreover, let $G-X = G[V(G) \setminus X]$. The set $X$ is called \textbf{independent} in $G$ if $E(G[X])=\varnothing$.  Finally, given a vertex $v \in V(G)$, let $G \div v$ be the hypergraph with $V(G \div v)=V(G) \setminus \{v\}$, $E(G \div v)=\{e \in E(G) ~|~ |i_G(e) \setminus \{v\}| \geq 2\}$ and $i_{G \div v}(e) = i_G(e) \setminus \{v\}$ for all $e \in E(G \div v)$. We say that $G \div v$ results from $G$ by \textbf{shrinking} $G$ at $v$. Note that if $G$ is a graph, then $G \div v = G - \{v\}$.
 
For a hypergraph $G$ and a vertex $v$ from $V(G)$, let
$$E_G(v)=\{e \in E(H) ~|~ v \in i_G(e) \}.$$
Then, $d_G(v)=|E_G(v)|$ is the \textbf{degree} of $v$ in $H$. As usual, we call $\delta(G)=\min_{v \in V(G)}d_G(v)$ the \textbf{minimum degree} of $G$ and $\Delta(G)=\max_{v \in V(G)}d_G(v)$ the \textbf{maximum degree} of $G$. If $G$ is empty, we set $\delta(G)=\Delta(G)=0$. A hypergraph $G$ is $r$\textbf{-regular} or, briefly, \textbf{regular} if each vertex in $G$ has degree $r$.
 
Let $u,v$ be two distinct vertices of a hypergraph $G$. Then, $E_G(u,v)=E(G[\{u,v\}])$ is the set of ordinary edges that are incident to $u$ as well as $v$, and $\mu_G(u,v)=|E_G(u,v)|$ is the \textbf{multiplicity} of $u$ and $v$. Note that if $u$ and $v$ are distinct vertices from $G$, then it clearly holds
\begin{align} \label{eq_Gdivv}
d_{G \div v}(u)=d_{G}(u) - \mu_G(u,v).
\end{align} 
The \textbf{ordinary neighborhood} of a vertex $v$ in a hypergraph $G$ is the set of all vertices $u \in V(G)$ such that there is an edge $e$ with $i_G(e)=\{u,v\}$, we denote it by $N_G(v)$.

A \textbf{hypermatching} of $G$, or, briefly, \textbf{matching} of $G$ is an edge set $M \subseteq E(G)$ such that each vertex $v \in V(G)$ is contained in at most one edge from the set $M$. A \textbf{perfect (hyper-)matching} is a matching $M$ such that for each $v \in V(G)$ there is a (hyper-)edge $e \in M$ with $v \in i_G(e)$. A matching $M$ is called \textbf{empty} if $M= \varnothing$. 

A \textbf{hyperpath} of a non-empty hypergraph $G$ is a sequence $(v_1, e_1, v_2, e_2, \ldots, v_q, e_q, v_{q+1})$ of distinct vertices
$v_1, v_2, \ldots, v_{q+1}$ of $G$ and distinct edges
$e_1, e_2, \ldots, e_q$ of $G$ such that
$\{v_i,v_{i+1}\}\subseteq i_G(e_i)$ for $1 \leq i \leq q$ and $q \geq 0$. The hypergraph $G$ is \textbf{connected} if there is a hyperpath in $G$ between any two of its vertices. A (connected) \textbf{component} of $G$ is a maximal connected subhypergraph of $G$. A \textbf{separating vertex} of $G$ is a vertex $v \in V(G)$ such that $G$ is the union of two induced subhypergraph $G_1$ and $G_2$ with $V(G_1)\cap V(G_2)=\{v\}$ and $|G_i|\geq 2$ for $i\in \{1,2\}$. Note that $v$ is a separating vertex of $G$ if and only if $G \div v$ has more components than $G$. Regarding edges, an edge $e$ is a \textbf{bridge} of a hypergraph $G$, if $G-e$ has $|i_G(e)|-1$ more components than $G$. Finally, a \textbf{block} of $G$ is a maximal connected subhypergraph of $G$ that has no separating vertex. Thus, every block of $G$ is a connected induced subhypergraph of $G$. It is easy to see that two blocks of $G$ have at most one vertex in common, and that a vertex $v$ is a separating vertex of $G$ if and only if it is contained in more than one block. By $\mathcal{B}(G)$ we denote the set of all blocks of $G$. If $G$ does not contain any separating vertex, that is, $\mathcal{B}(G)=\{G\}$, we will also say that $G$ is a \textbf{block}.

In this paper we need some definitions only for graphs. As usual, by $C_n$ we denote the cycle with $n$ vertices and by $K_n$ we denote the complete graph on $n$ vertices. Moreover, for $n,t \geq 1$, by $K_{(n,t)}$ we denote the complete $n$-partite graph all of whose partite sets have $t$ vertices. In particular, $K_{(2,t)}$ is the complete bipartite graph $K_{t,t}$. If $G$ is a simple graph, let $tG$ denote the graph that results from $G$ by replacing each edge $e$ with $t$ parallel edges. In particular, $1G=G$. By $A(G)$ we denote the set of all two-subsets $\{u,v\} \in V(G)$ such that there in $G$ there is an edge $e$ with $i_G(e)=\{u,v\}$.

\subsection{Hypergraph Colorings}
A \textbf{coloring} of a hypergraph $G$ with \textbf{color set} $C$ is a mapping $\varphi: V(G) \to C$ such that for each edge $e$ there are vertices $u,v \in i_G(e)$ with $\varphi(u) \neq \varphi(v)$. If $C=\{1,2,\ldots,k\}$ and if $G$ admits a coloring with color set $C$, we say that $G$ is $k$-colorable. The smallest $k \geq 0$ such that $G$ is $k$-colorable is called the \textbf{chromatic number} $\chi(G)$ of $G$. This coloring concept was introduced by Erd\H os and Hajnal\cite{ErdHaj66} in the 1960s.

Note that if $G$ is a graph, this coloring concept coincides with the usual coloring concept for graphs. In particular, it is possible to extend various well known theorems in the topic of graph colorings to hypergraph colorings. For example, Brooks' well known theorem (see \cite{brooks}) that a connected graph $G$ satisfies $\chi(G) \leq \Delta(G) + 1$ and equality holds if and only if $G$ is a complete graph or an odd cycle, was extended to hypergraphs by Jones \cite{Jones}. He showed that if $G$ is a connected hypergraph, then $\chi(G) \leq \Delta(G) + 1$ and equality holds if and only if $G$ is a complete graph, an odd cycle, or $G=<e>$ for some edge $e$.

 A more generalized coloring concept is the so called \textbf{list-coloring concept}. Let again  $C$ be a color set and let $G$ be a hypergraph. A mapping $L:V(G) \to 2^C$ is called \textbf{list-assignment}. An $L$\textbf{-coloring} of $G$ is a coloring $\varphi$ of $G$ with color set $C$ such that $\varphi(v) \in L(v)$ for all $v \in V(G)$. The hypergraph $G$ is said to be $k$\textbf{-list-colorable} if $G$ admits an $L$-coloring for any list-assignment $L$ satisfying $|L(v)| \geq k$ for all $v \in V(G)$. The \textbf{list-chromatic number} or \textbf{choice number} $\chi_\ell(G)$ is the least integer $k$ such that $G$ is $k$-list-colorable. For graphs, list-colorings were introduced by Erd\H os, Rubin, and Taylor \cite{ErdRubTay79} and, independently, by Vizing \cite{Vizing}. They proved a Brooks type theorem, which was later extended to hypergraphs by Kostochka, Stiebitz, and Wirth \cite{KoStiWi96}. In this paper, we will generalize their theorem to DP-colorings of hypergraphs. 
 
\section{DP-Colorings of Hypergraphs}
This paper deals with \textbf{DP-colorings} of hypergraphs. For graphs, this concept was introduced by Dvo\v{r}\'{a}k and Postle \cite{DvoPo15}; they called it \textbf{correspondence coloring}. The orginial idea was taken from Plesnevi\v{c} and Vizing \cite{PleVi65} who showed how to transform the problem of finding a $k$-coloring of a graph to the problem of finding an independent vertex set of size $|V(G)|$ in the Cartesian product $G$\scalebox{0.8}{$\square$}$K_k$. Later, a lot of work on the topic of DP-colorings was done by Bernshteyn, Kostochka et al. (see \cite{BeKo17}, \cite{BeKoPro17}, \cite{BeKoZhu17}), who were the first to use the term DP-colorings. We will use an equivalent but slightly modified definition.

\subsection{The DP-Chromatic Number}
Let $G$ be a hypergraph. A \textbf{cover} of $G$ is a pair $(X,H)$ consisting of a map $X$ and a hypergraph $H$ such that the following conditions are fulfilled:

\begin{itemize}
\item[\textbf{(C1)}] $X: V(G) \to 2^{V(H)}$ is a function that assigns each vertex $v \in V(G)$ a vertex set $X_v = X(v) \subseteq V(H)$ such that the sets $X_v$ with $v \in V(G)$ are pairwise disjoint.

\item[\textbf{(C2)}] $H$ is a hypergraph with $V(H) = \bigcup_{v \in V(G)}X_v$ such that $X_v$ is an independent set of $H$, 
and for each edge $e \in E(G)$ there is a possibly empty (hyper-)matching $M_e$ in $H[\bigcup_{v \in i_G(e)} X_v]$ with $|i_H(\tilde{e}) \cap X_v|=1$ for all $v \in i_G(e)$ and for all $\tilde{e} \in M_e$. Moreover, $E(H)=\bigcup_{e \in E(G)}M_e$.
\end{itemize}

Now let $(X,H)$ be a cover of $G$. A vertex set $T \subseteq V(H)$ is a \textbf{transversal} of $(X,H)$ if $|T \cap X_v|=1$ for each vertex $v \in V(G)$. An \textbf{independent transversal} of $(X,H)$ is a transversal of $(X,H)$, which is an independent set of $H$. An independent transversal of $(X,H)$ is also called an $(X,H)$\textbf{-coloring} of $G$; the vertices of $H$ are called \textbf{colors}. We say that $G$ is $(X,H)$-colorable if $G$ admits an $(X,H)$-coloring. Let $f: V(G) \to \mathbb{N}_0$ be a function. Then, $G$ is said to be \textbf{DP-}$f$\textbf{-colorable} if $G$ is $(X,H)$-colorable for any cover $(X,H)$ of $G$ satisfying $|X_v| \geq f(v)$ for all $v \in V(G)$. When $f(v)=k$ for all $v \in V(G)$, the term becomes \textbf{DP-}$k$\textbf{-colorable}. The \textbf{DP-chromatic number} $\chi_{\text{DP}}(G)$ is the least integer $k \geq 0$ such that $G$ is DP-$k$-colorable. Recently, Bernshteyn and Kostochka \cite{BeKo18} also introduced the DP-chromatic number of a hypergraph in an equivalent but slightly different way.

An especially interesting fact about DP-colorings is that one can reduce the list-coloring problem to DP-colorings. To see this, let $G$ be a hypergraph and let $L$ be a list-assignment for $G$. Let $(X,H)$ be a cover of $G$ as follows:

\begin{itemize}
\item For $v \in V(G)$, let $X_v=\{(v,x)~|~ x \in L(v)\}$ and let $V(H)=\bigcup_{v \in V(G)}X_v$.
\item For any set $S=\{(v_1,x_1),(v_2,x_2),\ldots,(v_\ell,x_\ell)\}$ of vertices from $H$, there is an edge $e' \in E(H)$ with $i_H(e')=S$ if and only if in $G$ there is an edge $e$ with $i_G(e)=\{v_1,v_2,\ldots,v_\ell\}$ and if $x_1=x_2=\ldots=x_\ell$.
\end{itemize}

It is easy to check that $(X,H)$ is indeed a cover of $G$. Furthermore, if $\varphi$ is an $L$-coloring of $G$, then $T=\{(v, \varphi(v)) ~|~ v \in V(G)\}$ clearly is an independent transversal of $H$ and so $G$ is $(X,H)$-colorable. If conversely $T$ is an independent transversal of $H$, then there is a mapping $\varphi$ from $V(G)$ into a color set such that $\varphi(v) \in L(v)$ for all $v \in V(G)$ and that $T=\{(v,\varphi(v)) ~ | ~ v \in V(G)\}$. Then, it is easy to see that $\varphi$ is an $L$-coloring of $G$. Thus, $G$ is $L$-colorable if and only if $G$ is $(X,H)$-colorable. Furthermore, we clearly have $|X_v|=|L(v)|$ for all $v \in V(G)$. Hence, if $k \geq 0$ is an integer, then $G$ is $k$-list-colorable if $G$ is DP-$k$-colorable and, in particular, $\chi_\ell(G) \leq \chi_{\text{DP}}(G)$.

In order to obtain an upper bound for the DP-chromatic number we use a sequential coloring algorithm. 

\begin{algorithm}
\caption{Sequential coloring algorithm}
\begin{algorithmic}[1]
\State Input: hypergraph $G$ and cover $(X,H)$ of $G$.
\State Choose an arbitrary vertex order $(v_1,v_2,\ldots,v_n)$ of $G$.
\State Let $T=\varnothing$.
\ForAll{$i=1,2,\ldots,n$}
\State{Choose a vertex (color) $x_i$ from $X_{v_i}$ such that $E(H[T\cup\{x_i\}])=\varnothing$}.
\State{Let $T=T \cup \{x_i\}$}.
\EndFor
\State Return: Independent transversal $T$.
\end{algorithmic}
\end{algorithm}

Clearly, if $|X_{v_i}| \geq d_{G[\{v_1,v_2,\ldots,v_i\}]}(v_i)+ 1$ for all $i \in \{1,2,\ldots,n\}$, in step 5 there is always a possible choice for $x_i$ and, thus, the algorithm terminates with an $(X,H)$-coloring of $G$. This is due to the fact that for each edge $e \in E(G)$ with $v_i \in i_H(e)$ and for any set of fixed colors $$\{x_k ~|~ v_k \in i_H(e), k=1,2,\ldots,i-1\},$$
 at most one color from $X_{v_i}$ is forbidden. Hence, in $X_{v_i}$, at most $d_{G[v_1,v_2,\ldots,v_i]}(v_i)$ vertices are forbidden. As a consequence, if $f(v) \geq d_G(v) + 1$ for all $v \in V(G)$, then $G$ is DP-$f$-colorable. As usual, the \textbf{coloring number} $\text{col}(G)$ of a hypergraph $G$ is the least integer $k$ such that each non-empty subhypergraph contains a vertex of degree at most $k$. Therefore, as a consequence of the above sequential coloring algorithm, we have $\chi_{\text{DP}}(G)\leq \text{col}(G)$. Summarizing, we obtain
\begin{align}\label{eq_chi-inequalities}
\chi(G) \leq \chi_\ell(G) \leq \chi_{\text{DP}}(G) \leq \text{col}(G) \leq \Delta(G) + 1.
\end{align} 

Our aim is to characterize the hypergraphs $G$ for which $\chi_{\text{DP}}(G) = \Delta(G) + 1$ holds. Clearly, if $G$ is an odd cycle, we have $\chi(G)=\Delta(G)+1=3$ and, thus, equality holds. To see that $\chi_{\text{DP}}(G)=3$ holds for even cycles, as well, we construct an appropriate cover of $G$. Assume that $V(G)=\{1,2,\ldots,n\}$ with $n \geq 2$ even and $E(G)=\{uv ~|~ u,v \in V(G) \text{ and } u-v \equiv 1 \text{(mod } n \text{)}\}$. Let $(X,H)$ be the cover of $G$ with $X_v=\{v\} \times \{1,2\}$ for all $v \in V(G)$ and $E(H)=\{(u,i)(v,j) ~|~ |u-v| = 1 \text{ and } i=j; \text{ or } \{u,v\}=\{1,n\} \text{ and } i-j \equiv 1 \text{ (mod } 1 \text{)}\}$. Then, $(X,H)$ is a cover of $G$ with $|X_v|=2$ for all $v \in V(G)$. Moreover, $H=C_{2n}$ and $(X,H)$ has no independent transversal. As emphasized in \cite{BeKoPro17}, the fact that $\chi_{\text{DP}}(C_n)=3$ for all $n \geq 2$ and not only for odd $n \geq 3$ marks an important difference between the DP-chromatic number and the list-chromatic number.

\subsection{DP-Degree Colorable Hypergraphs}

We say that a hypergraph $G$ is \textbf{DP-degree colorable} if $G$ is $(X,H)$-colorable whenever $(X,H)$ is a cover of $G$ such that $|X_v| \geq d_G(v)$ for all $v \in V(G)$. Regarding graphs, Bernshteyn, Kostochka, and Pron~\cite{BeKoPro17} proved that a  connected graph $G$ is not DP-degree colorable if and only if each block of $G$ is a $tK_n$ or a $tC_n$ for some integers $t,n \geq 1$. Of course, when dealing with DP-coloring, is it not only of interest to characterize the non DP-degree colorable graphs, but also the corresponding 'bad' covers. This was done by Kim and Ozeki \cite{KiOz17} (see Theorem~\ref{theorem_Kim-Ozeki}). The aim of this section is to give the corresponding characterizations for DP-degree-colorable hypergraphs.

A \textbf{feasible configuration} is a triple $(G,X,H)$ consisting of a connected hypergraph $G$ and a cover $(X,H)$ of $G$. A feasible configuration is said to be \textbf{degree-feasible} if $|X_v| \geq d_G(v)$ for each vertex $v \in V(G)$. Furthermore, $(G,X,H)$ is \textbf{colorable} if $G$ is $(X,H)$-colorable, otherwise it is called \textbf{uncolorable}.

The next proposition lists some basic properties of feasible configurations; the proofs are straightforward and left to the reader.

\begin{proposition} \label{prop_feas-config}
Let $(G,X,H)$ be a feasible configuration. Then, the following statements hold. 
\begin{itemize}
\item[\upshape (a)] For distinct vertices $u,v$ of $G$, the hypergraph $H[X_u \cup X_v]$ is a bipartite graph with parts $X_u$ and $X_v$ whose maximum degree is at most $\mu_G(u,v)$. Furthermore, for every vertex $v \in V(G)$ and every vertex $x \in X_v$, we have $d_H(x) \leq d_G(v)$.
\item[\upshape (b)] Let $H'$ be a spanning subhypergraph of $H$. Then, $(G,X,H')$ is a feasible configuration. If $(G,X,H)$ is colorable, then $(G,X,H')$ is colorable, too. Furthermore, $(G,X,H)$ is degree-feasible if and only if $(G,X,H')$ is degree feasible.
\end{itemize}
\end{proposition}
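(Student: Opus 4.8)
The plan is to verify each claim directly from the cover axioms \textbf{(C1)} and \textbf{(C2)}, since the proposition is really just unpacking the definitions. I would treat parts (a) and (b) separately, and within (a) handle the bipartiteness, the degree bound $\mu_G(u,v)$, and the global degree bound $d_G(v)$ in turn.

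For part (a), fix distinct vertices $u,v$ and consider $H[X_u \cup X_v]$. First I would argue this is a \emph{graph} (no hyperedges): by \textbf{(C2)}, every edge of $H$ lies in some matching $M_e$ for an edge $e \in E(G)$ with $|i_H(\tilde e) \cap X_w| = 1$ for every $w \in i_G(e)$, and an edge $\tilde e$ of $H$ that is induced in $X_u \cup X_v$ must have all its endpoints inside $X_u \cup X_v$; since each $X_w$ contributes exactly one endpoint, such an $\tilde e$ can meet at most the two classes $X_u, X_v$, forcing $|i_H(\tilde e)| = 2$, so $\tilde e$ is an ordinary edge with one endpoint in each class. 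This simultaneously gives bipartiteness with parts $X_u$ and $X_v$ (no edge lies within a single $X_w$ because $X_w$ is independent by \textbf{(C2)}). For the maximum-degree bound, I would fix $x \in X_u$ and count its neighbors in $X_v$: each such neighbor comes from an edge $\tilde e \in M_e$ for some $e$ with $i_G(e) \supseteq \{u,v\}$, but since $H[X_u \cup X_v]$ only sees edges induced inside these two classes and each such $\tilde e$ satisfies $i_H(\tilde e) = \{x, y\}$ with $y \in X_v$, the relevant edges $e$ must satisfy $i_G(e) = \{u,v\}$, i.e.\ $e \in E_G(u,v)$. Because each $M_e$ is a matching, $x$ lies in at most one edge of each such $M_e$, so $x$ has at most $|E_G(u,v)| = \mu_G(u,v)$ neighbors in $X_v$. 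For the last bound $d_H(x) \le d_G(v)$ with $x \in X_v$, I would note that any edge $\tilde e \in E_G(x)$ lies in some $M_e$ with $v \in i_G(e)$, that the map $\tilde e \mapsto e$ is injective on $E_H(x)$ (distinct edges of $H$ through $x$ cannot lie in the same matching $M_e$, again because $M_e$ is a matching and covers $x$ at most once), and that the image lies in $E_G(v)$; hence $d_H(x) \le d_G(v)$.

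For part (b), let $H'$ be a spanning subhypergraph of $H$, so $V(H') = V(H)$ and $E(H') \subseteq E(H)$. I would check the three assertions in order. That $(G,X,H')$ is again a cover is immediate: \textbf{(C1)} depends only on $X$ and is unchanged, and for \textbf{(C2)} one sets $M'_e = M_e \cap E(H')$, which is still a matching in $H'[\bigcup_{w \in i_G(e)} X_w]$ satisfying the intersection condition (these properties are inherited by taking subsets of edges), while $E(H') = \bigcup_e M'_e$ follows from $E(H') \subseteq E(H) = \bigcup_e M_e$ by intersecting with $E(H')$. Colorability is monotone downward in edges: an independent transversal $T$ of $(X,H)$ satisfies $E(H[T]) = \varnothing$, and since $E(H'[T]) \subseteq E(H[T]) = \varnothing$, the same $T$ is an independent transversal of $(X,H')$. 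Finally, degree-feasibility depends only on the values $|X_v|$ and $d_G(v)$, neither of which involves $H$, so the two configurations are degree-feasible simultaneously.

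I do not expect any genuine obstacle here; the only point requiring care is the maximum-degree claim in (a), where one must be careful that an edge of $H$ counted in the \emph{induced} subhypergraph $H[X_u \cup X_v]$ really does force $i_G(e) = \{u,v\}$ rather than merely $\{u,v\} \subseteq i_G(e)$. The subtlety is that a matching edge $\tilde e \in M_e$ for a larger edge $e$ (with $i_G(e) \supsetneq \{u,v\}$) has endpoints in more than two classes $X_w$ and therefore is \emph{not} induced in $X_u \cup X_v$, so it contributes nothing to $H[X_u \cup X_v]$; making this observation precise is what pins the degree bound to $\mu_G(u,v) = |E_G(u,v)|$ rather than to the larger count of all edges $e$ with $\{u,v\} \subseteq i_G(e)$. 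All remaining steps are routine definition-chasing, consistent with the authors' remark that the proofs are left to the reader.
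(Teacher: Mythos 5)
Your proof is correct, and it is exactly the routine definition-chasing verification the paper intends when it states that the proofs of Proposition~\ref{prop_feas-config} are ``straightforward and left to the reader''; in particular, you correctly identify the one delicate point, namely that a matching edge $\tilde e \in M_e$ with $i_G(e) \supsetneq \{u,v\}$ is not induced in $X_u \cup X_v$, which is what pins the degree bound to $\mu_G(u,v)$. The only blemish is the typo $E_G(x)$ where you mean $E_H(x)$; the argument itself is sound.
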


The above proposition leads to the following concept. We say that a feasible configuration $(G,X,H)$ is \textbf{minimal uncolorable} if $(G,X,H)$ is uncolorable, but $(G,X,H-e)$ is colorable for each $e \in E(H)$. As usual, $H-e$ denotes the hypergraph obtained from $H$ by deleting the edge $e$. Clearly, if $|G| \geq 2$ and if $\tilde{H}$ is the edgeless spanning hypergraph of $H$, then $(G,X,\tilde{H})$ is colorable. Thus, it follows from the above Proposition that if $(G,X,H)$ is an uncolorable feasible configuration, then there is a spanning subhypergraph $H'$ of $H$ such that $(G,X,H')$ is a minimal uncolorable feasible configuration. Furthermore, if $(G,X,H)$ is a minimal uncolorable feasible configuration, then $H$ clearly is a simple hypergraph.

In order to characterize the class of minimal uncolorable degree-feasible configurations, we firstly need to introduce three basic types of degree-feasible configurations.

We say that $(G,X,H)$ is a \textbf{K-configuration} if $G=tK_n$ for some integers $t,n \geq 1$ and if $(X,H)$ is a cover of $G$ such that for every vertex $v \in V(G)$, there is a partition $(X_v^1,X_v^2,\ldots,X_v^{n-1})$ of $X_v$ satisfying the following conditions:
\begin{itemize}
\item For every $i \in \{1,2,\ldots,n-1\}$, the graph $H^i=H[\bigcup_{v \in V(G)} X_v^i]$ is a $K_{(n,t)}$ whose partite sets are the sets $X_v^i$ with $v \in V(G)$, and
\item $H=H^1 \cup H^2 \cup \ldots \cup H^{n-1}$.
\end{itemize}
It is an easy exercise to check that each $K$-configuration is a minimal uncolorable degree-feasible configuration. Note that for $n=1$, we have $G=K_1$, $X=\varnothing$, and $H=\varnothing$.

Next we define the so called C-configurations. We say that $(G,X,H)$ is an \textbf{odd C-configuration} if $G=tC_n$ for some integers $t \geq 1$ and $n \geq 5$ odd and if $(X,H)$ is a cover of $G$ such that for every vertex $v \in V(G)$, there is a partition $(X_v^1,X_v^2)$ of $X_v$ satisfying the following conditions:
\begin{itemize}
\item For every $i \in \{1,2\}$ and for every set $\{u,v\} \in A(G)$, the graph $H^i_{\{u,v\}}=H[X_u^i \cup X_v^i]$ is a $K_{t,t}$ whose partite sets are $X_u^i$ and $X_v^i$, and
\item $H$ is the union of all graphs $H^i_{\{u,v\}}$ with $i \in \{1,2\}$ and $\{u,v\} \in A(G)$.
\end{itemize}
It is easy to verify that any odd C-configuration is a minimal uncolorable degree-feasible configuration.

We call $(G,X,H)$ an \textbf{even C-configuration} if $G=tC_n$ for some integers $t \geq 1, n \geq 4$ even and if $(X,H)$ is a cover of $G$ such that for every vertex $v \in V(G)$, there is a partition $(X_v^1,X_v^2)$ of $X_v$ and a set $\{w,w'\} \in A(G)$ satisfying the following conditions:
\begin{itemize}
\item For every $i \in \{1,2\}$ and for every set $\{v,w\} \in A(G)$ different from $\{w,w'\}$, the graph $H^i_{\{u,v\}}=H[X_u^i \cup X_v^i]$ is a $K_{t,t}$ whose partite sets are $X_u^i$ and $X_v^i$,
\item $H^1_{\{w,w'\}}=H [X_w^1 \cup X_{w'}^2]$ is a $K_{t,t}$ whose partite sets are $X_w^1$ and $X_{w'}^2$,
\item $H^2_{\{w,w'\}}=H[X_w^2 \cup X_{w'}^1]$ is a $K_{t,t}$ whose partite sets are $X_w^2$ and $X_{w'}^1$, and
\item $H$ is the union of all graphs $H^i_{\{u,v\}}$ with $i \in \{1,2\}$ and $\{u,v\} \in A(G)$.
\end{itemize}
Again, it is easy to check that any even C-configuration is a minimal uncolorable degree-feasible configuration. By a \textbf{C-configuration} we either mean an even or an odd C-configuration. 

Finally, we say that $(G,X,H)$ is an \textbf{E-configuration} if $G=<e>$ for some $e \in E(H)$, if $|X_v| = 1$ for each $v \in V(G)$ and if $H \cong G$. Clearly, each E-configuration is a minimal uncolorable degree-feasible configuration.

We will show that we can construct any minimal uncolorable degree-feasible configuration from these three basic configurations using the following operation. Let $(G^1,X^1,H^1)$ and $(G^2,X^2,H^2)$ be two feasible configurations, which are \textbf{disjoint}, that is, $V(G^1) \cap V(G^2) = \varnothing$ and $V(H^1) \cap V(H^2)= \varnothing$. Furthermore, let $G$ be the hypergraph obtained from $G^1$ and $G^2$ by merging two vertices $v^1 \in V(G^1)$ and $v^2 \in V(G^2)$ to a new vertex $v^*$. Finally, let $H=H^1 \cup H^2$ and let $X:V(G) \to 2^{V(H)}$ be the mapping such that 
$$X_v=\begin{cases}
X^1_{v^1} \cup X^2_{v^2} & \text{if } v=v^*,\\
X_v^i & \text{if } v \in V(G^i) \setminus \{v^i\} \text{ and } i \in \{1,2\}
\end{cases}$$
for $v \in V(G)$. Then, $(G,X,H)$ is a feasible configuration and we say that $(G,X,H)$ is obtained from $(G^1,X^1,H^1)$ and $(G^2,X^2,H^2)$ by \textbf{merging} $v^1$ and $v^2$ to $v^*$. Since $d_G(v^*)=d_{G^1}(v^1) + d_{G^2}(v^2)$, it follows that $(G,X,H)$ is degree-feasible if both $(G^1,X^1,H^1)$ and $(G^2,X^2,H^2)$ are degree-feasible.

Now we define the class of \textbf{constructible configurations} as the smallest class of feasible configurations that contains each K-configuration, each C-configuration and each E-configuration and that is closed under the merging operation. Thus, if $(G,X,H)$ is a constructible configuration, then each block of $G$ is a $tK_n$ for $t \geq 1, n \geq 1$, a $tC_n$ for $t \geq 1, n \geq 3$, or of the form $<e>$ for some edge $e$. We call a block $B \in \mathcal{B}(G)$ a \textbf{DP-brick} if $B=tK_n$ for some $t \geq 1, n \geq 1$ or if $B=tC_n$ for some $t \geq 1, n \geq 3$. Moreover, we say that $B \in \mathcal{B}(G)$ is a \textbf{DP-hyperbrick}, if $B$ is either a DP-brick or of the form $<e>$ for some edge $e$.

Now we are able to state our main result.

\begin{theorem} \label{theorem_main-result}
Let $(G,X,H)$ be a degree-feasible configuration. Then, $(G,X,H)$ is minimal uncolorable if and only if $(G,X,H)$ is constructible.
\end{theorem}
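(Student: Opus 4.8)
The plan is to prove both directions of the equivalence, with the ``if'' direction (constructible $\Rightarrow$ minimal uncolorable) being the routine part and the ``only if'' direction (minimal uncolorable $\Rightarrow$ constructible) being the heart of the argument. For the forward implication, I would proceed by structural induction on the number of merging operations used to build the configuration. The base cases are exactly the K-, C-, and E-configurations, each of which was already asserted to be minimal uncolorable and degree-feasible. For the inductive step, suppose $(G,X,H)$ is obtained by merging $(G^1,X^1,H^1)$ and $(G^2,X^2,H^2)$ at the vertex $v^*$. Degree-feasibility is preserved by the remark following the merging definition. For uncolorability, I would argue that any $(X,H)$-coloring restricts to $(X^i,H^i)$-colorings on each side, and since $X_{v^*}=X^1_{v^1}\cup X^2_{v^2}$, the chosen color in $X_{v^*}$ lies in exactly one of the two parts, forcing an honest coloring of the corresponding side --- contradicting the inductive hypothesis that at least one side is uncolorable. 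Minimality (that deleting any edge of $H$ yields a colorable configuration) follows because deleting an edge lives entirely in one $H^i$; by minimality of that side it becomes colorable, and one then glues colorings across $v^*$, using that the color classes $X^1_{v^1}$ and $X^2_{v^2}$ are disjoint so a valid color on one side extends freely to the merged vertex.

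For the converse, the strategy is induction on $|V(G)|+|E(G)|$, reducing a minimal uncolorable degree-feasible configuration either to a single DP-hyperbrick or to a nontrivial merge of two strictly smaller such configurations. First I would handle the case where $G$ has a separating vertex $v$: here $G=G_1\cup G_2$ with $V(G_1)\cap V(G_2)=\{v\}$, and I would show that $(G,X,H)$ is the merge of the two induced configurations, each of which inherits minimal-uncolorability and degree-feasibility (using equation~(\ref{eq_Gdivv}) and the additivity $d_G(v)=d_{G_1}(v)+d_{G_2}(v)$), so the induction hypothesis applies to each block. The substantive work is therefore the \textbf{2-connected case}: assuming $G$ is a block, I must prove that $G$ is a $tK_n$, a $tC_n$, or of the form $<e>$, and that the cover $(X,H)$ has precisely the product/cyclic structure demanded by a K-, C-, or E-configuration. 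The key tool is Proposition~\ref{prop_feas-config}(a): degree-feasibility forces $|X_v|\ge d_G(v)$, and when a coloring greedily fails it must be because \emph{every} color at each vertex is blocked, which pins down the bipartite graphs $H[X_u\cup X_v]$ to be perfect matchings (equivalently $K_{t,t}$'s after grouping). I expect to argue that minimal uncolorability forces tight degree counts everywhere --- each $x\in X_v$ must have $d_H(x)=d_G(v)$, so every potential conflict is realized --- and then a connectivity/parity analysis distinguishes the complete-graph pattern from the cyclic pattern.

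The main obstacle will be the 2-connected case, specifically extracting the rigid combinatorial structure of $H$ from mere uncolorability. The difficulty is that one must simultaneously determine (i) that the underlying hypergraph $G$ is one of the three brick types and (ii) that the matchings $M_e$ assemble into the exact $K_{(n,t)}$ or twisted-$K_{t,t}$ shape. I anticipate the cleanest route is to first treat the hyperedge case separately: if $G$ contains a genuine hyperedge $e$ with $|i_G(e)|\ge 3$, I would show that 2-connectedness plus the matching structure of $M_e$ forces $G=<e>$ and $|X_v|=1$, i.e.\ an E-configuration --- the point being that a hyperedge can only be ``used up'' uncolorably when its matching is a single edge and no color slack remains. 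Once hyperedges are excluded, $G$ is an ordinary multigraph, and I would invoke the graph-case structure theorem of Bernshteyn, Kostochka, and Pron together with the Kim--Ozeki cover characterization (Theorem~\ref{theorem_Kim-Ozeki}) to identify the blocks as $tK_n$ or $tC_n$ with the asserted covers. The delicate part is verifying that the minimality hypothesis transfers correctly through shrinking $G\div v$ and that no ``mixed'' block (part hyperedge, part ordinary) can survive as minimal uncolorable; I would rule these out by a counting argument showing any such mixture admits a greedy extension of a coloring.
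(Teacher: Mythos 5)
Your overall architecture matches the paper's: the forward direction by induction over the merging operation, and the converse by induction with a separating-vertex case and a block case, the latter split into the hyperedge case (forced to be an E-configuration, as in Proposition~\ref{prop_main-proposition}(c)) and the ordinary-multigraph case. Your shortcut of invoking the Kim--Ozeki characterization (Theorem~\ref{theorem_Kim-Ozeki}) for a $2$-connected multigraph block is legitimate and is explicitly sanctioned by the paper, which proves its Claims~3 and~4 from scratch only for the reader's convenience; one only has to add that, since a K- or C-configuration is uncolorable and $H \supseteq H^G$ is spanning, minimality forces $H = H^G$.

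There is, however, a genuine gap in your separating-vertex case. You claim that $(G,X,H)$ ``is the merge of the two induced configurations, each of which inherits minimal-uncolorability.'' For the naive induced configurations this is false: restricting $X$ and $H$ to $G_1$ keeps the \emph{full} set $X_v$ at the cut vertex, and since $|X_v| = d_G(v) = d_{G_1}(v) + d_{G_2}(v) > d_{G_1}(v)$, Proposition~\ref{prop_main-proposition}(a) implies that this restricted configuration is colorable, not uncolorable. The missing idea --- the heart of the paper's Case~1 --- is how to partition $X_v$ into two parts so that a merge structure appears at all: the paper takes $X_j$ ($j=1,2$) to be the set of colors of $X_v$ occurring in \emph{no} independent set of $H$ meeting every $X_u$, $u \in V(G_j)$, exactly once; uncolorability of $(G,X,H)$ shows $X_1 \cup X_2 = X_v$, a counting argument via Proposition~\ref{prop_main-proposition}(a) gives $|X_j| = d_{G_j}(v)$ and $X_1 \cap X_2 = \varnothing$, and minimality is invoked once more to conclude that $H$ equals $H^1 \cup H^2$ (a priori $H$ could contain edges joining colors of $X_2$ to color classes on the $G_1$ side). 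Without this, the induction hypothesis cannot be applied to either side. A second, smaller gap of the same nature occurs in your forward direction: after deleting an edge of $H^1$ and coloring side $1$, the still-uncolorable side $2$ cannot be colored ``freely''; you need the nontrivial fact that an uncolorable degree-feasible configuration admits an independent set meeting every color class except the one at a prescribed vertex (Proposition~\ref{prop_main-proposition}(e)).
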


Before we can prove Theorem~\ref{theorem_main-result}, we need various propositions. The next one describes the block-configurations of constructible configurations, the proof can be done by induction on the number of blocks and is left to the reader.

\begin{proposition} \label{prop_block-structure}
Let $(G,X,H)$ be a constructible configuration. Then, for each block $B \in \mathcal{B}(G)$ there is a uniquely determined cover $(X^B,H^B)$ of $B$ such that the following statements hold:
\begin{itemize}
\item[\upshape (a)] For each block $B \in \mathcal{B}(G)$, the triple $(B,X^B,H^B)$ is a {\upshape K}-configuration, a {\upshape C}-configuration, or an {\upshape E}-configuration.
\item[\upshape (b)] The hypergraphs $H^B$ with $B \in \mathcal{B}(G)$ are pairwise disjoint and $H = \cup_{B \in \mathcal{B}(G)} H^B$.
\item[\upshape (c)] For every vertex $v \in V(G)$ it holds $X_v = \bigcup_{B \in \mathcal{B}(G), v \in V(B)}X_v^B$.
\end{itemize}
\end{proposition}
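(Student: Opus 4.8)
The plan is to prove Proposition~\ref{prop_block-structure} by induction on the number of blocks of $G$, mirroring the inductive definition of the class of constructible configurations. The key observation is that the merging operation used to build constructible configurations is compatible with the block decomposition: merging two vertices $v^1$ and $v^2$ to a new vertex $v^*$ glues the two configurations at a single separating vertex, so the blocks of the resulting hypergraph $G$ are precisely the blocks of $G^1$ together with the blocks of $G^2$, with the identifications of $v^1$ and $v^2$ into $v^*$ carried through. This structural fact about how merging interacts with $\mathcal{B}(G)$ is what makes the induction go through, and I would establish it first as the backbone of the argument.

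For the base case, suppose $(G,X,H)$ is itself a K-configuration, a C-configuration, or an E-configuration. Then by definition $G$ is a single block, namely $G = tK_n$, $G = tC_n$, or $G = \langle e\rangle$, so $\mathcal{B}(G) = \{G\}$. Here the unique cover $(X^B,H^B)$ is simply $(X,H)$ itself, and all three statements (a), (b), (c) hold trivially: (a) because $(G,X,H)$ is one of the three basic types by hypothesis, (b) because $H^G = H$ is the whole hypergraph and the disjointness condition is vacuous over a single block, and (c) because the union on the right-hand side collapses to the single term $X_v^G = X_v$.

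For the inductive step, I would assume that $(G,X,H)$ is obtained by merging two disjoint constructible configurations $(G^1,X^1,H^1)$ and $(G^2,X^2,H^2)$ at vertices $v^1, v^2$ mapped to $v^*$, each of which satisfies the proposition by the induction hypothesis. Since $v^*$ is a separating vertex of $G$ and every block $B$ of $G$ lies entirely within $G^1$ or within $G^2$ (with $v^*$ replacing $v^j$ wherever it occurs), I can define $(X^B,H^B)$ for a block $B \subseteq G^j$ to be the cover supplied by the induction hypothesis applied to $(G^j,X^j,H^j)$, transporting $v^j$ to $v^*$. Statement (a) then follows immediately, as the type of each block is inherited from the subconfiguration it came from. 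For statement (b), since $H = H^1 \cup H^2$ with $H^1$ and $H^2$ disjoint, and since the $H^B$ within each $G^j$ are pairwise disjoint and cover $H^j$ by induction, the full collection $\{H^B : B \in \mathcal{B}(G)\}$ is pairwise disjoint and unions to $H$. Statement (c) requires the most care, as it is the only place the merge genuinely changes the data: for $v \neq v^*$ the equality $X_v = \bigcup_B X_v^B$ is just the inductive equality within the relevant $G^j$, while for $v = v^*$ I must use the definition $X_{v^*} = X^1_{v^1} \cup X^2_{v^2}$ together with the fact that the blocks containing $v^*$ are exactly those blocks of $G^1$ containing $v^1$ together with those blocks of $G^2$ containing $v^2$.

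The main obstacle, and the step I would be most careful about, is verifying that the cover $(X^B,H^B)$ produced by the induction is genuinely \emph{uniquely determined} and that the bookkeeping at the separating vertex $v^*$ is consistent, in particular checking that no block's cover is inadvertently double-counted or split across the merge. Uniqueness should follow because each block $B$ is an induced subhypergraph and $H^B$ must equal the restriction of $H$ to the colors lying over $V(B)$, so there is no freedom in the choice; but confirming that this restriction indeed recovers the inductively given cover, and that the partition-into-blocks of the color sets $X_v$ in (c) is forced rather than merely possible, is where the argument has to be pinned down. As the authors note, this is a routine induction, so beyond establishing the merge-versus-blocks compatibility carefully, the remaining verifications are direct.
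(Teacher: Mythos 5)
Your overall route is exactly the one the paper intends---the paper itself says the proposition ``can be done by induction on the number of blocks'' and leaves it to the reader---and your skeleton (base case the three basic configurations, inductive step the merging operation, with uniqueness handled by a forcing argument) is the right one. However, there are two concrete points at which your argument fails as written. The first is your backbone claim that merging always glues at a separating vertex, so that $\mathcal{B}(G)$ is the disjoint union of $\mathcal{B}(G^1)$ and $\mathcal{B}(G^2)$. This is false in a degenerate case that the paper's definitions explicitly allow: the K-configuration with $n=1$ has $G^1=K_1$, $X^1=\varnothing$, $H^1=\varnothing$, and nothing in the definition of merging forbids taking such a factor. In that case $v^*$ is \emph{not} a separating vertex of $G$, the block $K_1$ simply disappears ($G$ is isomorphic to $G^2$), and---more seriously---$G^2$ has the same number of blocks as $G$, so your induction hypothesis cannot be invoked. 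The repair is easy but must be stated: either induct on the length of a derivation of $(G,X,H)$ from basic configurations instead of on $|\mathcal{B}(G)|$, or first observe that merging with a $K_1$-configuration yields a configuration isomorphic to the other factor (it contributes no colors and no edges), so every constructible configuration other than the $K_1$-configuration itself admits a derivation in which no $K_1$-factor is ever merged.

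The second point is that your uniqueness argument is misstated: $H^B$ is \emph{not} ``the restriction of $H$ to the colors lying over $V(B)$''. For a separating vertex $v\in V(B)$, the induced subhypergraph $H[\bigcup_{u\in V(B)}X_u]$ contains all of $X_v$, including the colors belonging to the covers of the other blocks through $v$, so it properly contains $H^B$ at the vertex level. What is actually forced is the following. Every edge of $H$ lies in a matching $M_e$ for some $e\in E(G)$, and each $e\in E(G)$ lies in a unique block; since $V(H^{B'})=\bigcup_{u\in V(B')}X^{B'}_u$ with $X^{B'}_u\subseteq X_u$ and blocks are induced subhypergraphs, an edge of $M_e$ can lie in $H^{B'}$ only if $e\in E(B')$. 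Hence condition (b) forces $E(H^B)=\bigcup_{e\in E(B)}M_e$ for every block $B$. Then, because in every K-, C-, and E-configuration other than the $K_1$-configuration each color is incident to at least one edge of its cover, condition (a) forces $X_v^B$ to be precisely the set of colors in $X_v$ incident to an edge of $H^B$; this determines $(X^B,H^B)$ uniquely, and (c) amounts to the (inductively verified) fact that no color of a constructible configuration is isolated in $H$. With these two repairs your induction goes through and coincides with the argument the paper has in mind.
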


The next proposition is key in order to obtain our main result, it describes a reduction method that allows us to use induction on the number of vertices of $G$. Similar propositions to the next two propositions were proven by Bernshteyn, Kostochka, and Pron for graphs in \cite{BeKoPro17}.

\begin{proposition} \label{proposition_reduction}
Let $(G,X,H)$ be a feasible configuration with $|G| \geq 2$, let $v$ be a non-separating vertex of $G$, and let $x \in X_v$ be a color. We define a cover of the hypergraph $G'=G \div v$ as follows. For $u \in V(G')$ let
$$X'_u = X_u \setminus N_H(x)$$
and let $H'$ be the hypergraph with $V(H') = \bigcup_{u \in V(G')} X_u'$,
$$E(H') = \{ e ~|~ e \in E(H), |i_H(e) \setminus \{x\} | \geq 2, \text{ and } (i_{H}(e) \setminus \{x\}) \subseteq V(H')\},$$
and
$$i_{H'}(e)=i_H(e) \setminus \{x\}$$
for all $e \in E(H')$.

Then, $(G',X',H')$ is a feasible configuration, and in what follows we briefly write $(G',X',H')=(G,H,X)/(v,x)$. Moreover, the following statements hold:
\begin{itemize}
\item[\upshape (a)] If $(G,X,H)$ is degree-feasible, then $(G',X',H')$ is degree-feasible, too.
\item[\upshape (b)] If $(G,X,H)$ is uncolorable, then $(G',X',H')$ is uncolorable, too.
\end{itemize}
\end{proposition}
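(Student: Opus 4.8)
The plan is to verify the three assertions in turn: first that $(G',X',H')$ is a well-defined feasible configuration, then parts (a) and (b). For the feasibility, I would check conditions (C1) and (C2) directly against the definition of the cover. The sets $X'_u = X_u \setminus N_H(x)$ are pairwise disjoint since the $X_u$ already are, giving (C1). For (C2), the crucial point is that $G' = G \div v$ has vertex set $V(G) \setminus \{v\}$ and edge set $\{e \in E(G) : |i_G(e) \setminus \{v\}| \geq 2\}$, with incidences shrunk by removing $v$. Each edge $e'$ of $H'$ arises from an edge $e$ of $H$ by deleting the color $x$ from its incidence set, and I would match these to the edges of $G'$: an edge of $H$ lying in some matching $M_e$ either does not touch $X_v$ at all (and survives unchanged in $H'$, remaining in the matching for the corresponding edge of $G'$), or touches $X_v$ in exactly one color. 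If that color is not $x$, the whole matching edge is destroyed because its endpoint in $X_v$ lies outside $V(H')$; if that color is $x$, the edge survives with $x$ removed, and it corresponds to the shrunk edge $e \div v$ in $G'$ provided $|i_G(e) \setminus \{v\}| \geq 2$. I would check that the resulting edge sets $M_{e'}$ are still matchings in $H'[\bigcup_{u \in i_{G'}(e')} X'_u]$ meeting each relevant $X'_u$ exactly once, which follows from the original matching property together with the removal of all colors in $N_H(x)$.

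For part (a), suppose $(G,X,H)$ is degree-feasible, so $|X_u| \geq d_G(u)$ for every $u$. I must show $|X'_u| \geq d_{G'}(u)$ for every $u \in V(G')$. The key computation is $|X'_u| = |X_u| - |X_u \cap N_H(x)|$, and by part (a) of Proposition~\ref{prop_feas-config} the bipartite graph $H[X_u \cup X_v]$ has maximum degree at most $\mu_G(u,v)$, so the color $x$ has at most $\mu_G(u,v)$ neighbors in $X_u$, giving $|X_u \cap N_H(x)| \leq \mu_G(u,v)$. Hence $|X'_u| \geq |X_u| - \mu_G(u,v) \geq d_G(u) - \mu_G(u,v) = d_{G \div v}(u) = d_{G'}(u)$, where the middle equality is exactly identity~\eqref{eq_Gdivv}. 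This is a clean estimate once the neighbor count is bounded correctly.

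Part (b) is where I expect the real work, and it is the main obstacle. The goal is: if $G$ has no $(X,H)$-coloring, then $G'$ has no $(X',H')$-coloring. I would argue contrapositively. Suppose $T'$ is an independent transversal of $(X',H')$. I want to extend it to an independent transversal of $(X,H)$ by adding the color $x$ at the removed vertex $v$, that is, set $T = T' \cup \{x\}$. Since $T'$ picks one color from each $X'_u \subseteq X_u$ and we add $x \in X_v$, the set $T$ is a transversal of $(X,H)$; the content is showing it is independent in $H$. Here I use that $T'$ avoids $N_H(x)$ by construction (every color in $T'$ lies in some $X'_u = X_u \setminus N_H(x)$), so no edge of $H$ joining $x$ to a single other color can be violated. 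The subtle part is hyperedges: an edge $e \in E(H)$ with $x \in i_H(e)$ and $|i_H(e)| \geq 3$ is not automatically rendered harmless by $T'$ avoiding $N_H(x)$, so I must confirm that every such edge was either preserved in $H'$ (as $i_H(e) \setminus \{x\}$, hence already respected by the independent set $T'$) or else had $|i_H(e) \setminus \{x\}| < 2$, in which case it cannot form a valid matching edge for any edge of $G'$ and its only "live" incidence is through $x$ plus at most one surviving color—so it still cannot be monochromatically filled by $T$. This case analysis on edges of $H$ through $x$, splitting on the size of $i_H(e) \setminus \{x\}$ and on whether the corresponding edge survives the shrinking $G \div v$, is the technical heart of the argument, and I would handle it carefully to ensure $T$ meets no edge of $H$ in its full incidence set.

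Throughout I would lean on the fact that $v$ is non-separating only implicitly—it is needed elsewhere for the induction to keep $G'$ connected—but for the three claims of this proposition the non-separating hypothesis is not essential; the feasibility and the two implications follow from the construction and the degree identity alone.
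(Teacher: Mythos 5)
Your proposal takes the paper's route exactly: the degree estimate in (a) — $|X'_u| = |X_u| - |N_H(x)\cap X_u| \geq |X_u| - \mu_G(u,v) \geq d_G(u) - \mu_G(u,v) = d_{G'}(u)$ via Proposition~\ref{prop_feas-config}(a) and \eqref{eq_Gdivv} — is the paper's computation, and the paper's entire proof of (b) is your key observation that an independent transversal $T'$ of $(X',H')$ extends to the independent transversal $T = T'\cup\{x\}$ of $(X,H)$; the paper leaves both the feasibility check and this verification to the reader. However, two of your assertions are false as written and need repair. In the case analysis for (b): if $e \in E(H)$ has $x \in i_H(e)$ and $|i_H(e)| \geq 3$, then $|i_H(e)\setminus\{x\}| \geq 2$ holds automatically, so your second alternative is vacuous and the dichotomy ``preserved in $H'$, or else $|i_H(e)\setminus\{x\}| < 2$'' is not exhaustive. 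The case you omit is that $e$ is destroyed because some vertex of $i_H(e)\setminus\{x\}$ lies in $N_H(x)$ and hence outside $V(H')$; this can genuinely occur, since $N_H(x)$ is the \emph{ordinary} neighborhood and two vertices of $G$ may be joined both by a hyperedge and by an ordinary edge. That missing case is what actually has to be dispatched, and it is dispatched at once: such a vertex is not in $V(H') \supseteq T'$, so $i_H(e) \not\subseteq T'\cup\{x\}$. The same slip appears in your feasibility sketch: an edge of $H$ avoiding $X_v$ does not ``survive unchanged'' in $H'$; it too is deleted whenever it meets $N_H(x)$.

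More substantively, your closing remark is wrong: the non-separating hypothesis is essential to this very proposition, not merely ``elsewhere for the induction.'' A feasible configuration consists, by definition, of a \emph{connected} hypergraph together with a cover, and, as noted in the paper, $v$ is a separating vertex of $G$ if and only if $G \div v$ has more components than $G$. Since $G$ is connected and $|G| \geq 2$, the hypergraph $G' = G \div v$ is connected precisely because $v$ is non-separating; if $v$ were separating, $(G',X',H')$ would fail to be a feasible configuration at all, so the first conclusion of the proposition (and with it the meaning of (a) and (b), which are statements about feasible configurations) genuinely uses the hypothesis.
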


\begin{proof}
Clearly, $(X',H')$ is a cover of $G'$ and, hence, $(G',X',H')$ is a feasible configuration. Moreover, for $u \in V(G')$ it holds $d_{G'}(u)=d_{G}(u) - \mu_G(u,v)$ and $|N_H(x) \cap X_u| \leq \mu_G(u,v)$ (see \eqref{eq_Gdivv} and Proposition~\ref{prop_feas-config}). Thus, we obtain
$$|X'_u| = |X_u| - |N_H(x) \cap X_u| \geq |X_u| - \mu_G(u,v).$$
As $|X_u| \geq d_G(u)$, this leads to $|X_u'| \geq d_{G'}(u)$ and $(G',H',X')$ is degree-feasible. Furthermore, if $T'$ is an independent transversal of $(X',H')$, then $T=T'\cup\{x\}$ is an independent transversal of $(X,H)$. This proves (b).
\end{proof}

Using the above introduced reduction method, we obtain the following. 

\begin{proposition}\label{prop_main-proposition}
Let $(G,X,H)$ be an uncolorable degree-feasible configuration. Then, the following statements hold:
\begin{itemize}
\item[\upshape (a)] $|X_v|=d_G(v)$ for all $v \in V(G)$.
\item[\upshape (b)] For each non-separating vertex $z$ of $G$ and each vertex $v \neq z$ of $G$, it holds $|N_H(x) \cap X_v| = \mu_G(v,z)$ for all $x \in X_z$.
\item[\upshape (c)] Every hyperedge $e$ of $G$ is a bridge of $G$ and, therefore, $<e>$ is a block of $G$. As a consequence, there are no parallel hyperedges in $G$.
\item[\upshape (d)] If $G$ is a block, then $G$ is regular, and for distinct vertices $u,v$ of $G$, the hypergraph $H[X_u \cup X_v]$ is a $\mu_G(u,v)$-regular bipartite graph whose partite sets are $X_u$ and $X_v$.
\item[\upshape (e)] For each vertex $v \in V(G)$ there is an independent set $T$ in $H$ satisfying $|T \cap X_u| = 1$ for all $u \in V(G)  \setminus \{v\}$.

\end{itemize}
\end{proposition}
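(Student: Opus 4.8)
The plan is to build everything on two tools already available: the sequential coloring algorithm together with part~(a), and the reduction operation of Proposition~\ref{proposition_reduction}. I would prove (a) and (e) together from the algorithm. Since $G$ is connected, for any prescribed vertex $z$ there is an ordering $(v_1,\dots,v_n)$ with $v_n=z$ in which every $v_i$ with $i<n$ is incident with an edge meeting $\{v_{i+1},\dots,v_n\}$ (build it by a reverse breadth-first search rooted at $z$); for such an $i$ that edge is not contained in $\{v_1,\dots,v_i\}$, so $d_{G[\{v_1,\dots,v_i\}]}(v_i)\le d_G(v_i)-1$. For (a), suppose $|X_{v_0}|>d_G(v_0)$ and take $z=v_0$: then $|X_{v_i}|\ge d_G(v_i)\ge d_{G[\{v_1,\dots,v_i\}]}(v_i)+1$ for $i<n$ while $|X_{v_n}|\ge d_G(v_n)+1=d_{G[\{v_1,\dots,v_n\}]}(v_n)+1$, so the algorithm returns an $(X,H)$-coloring, contradicting uncolorability; hence $|X_v|=d_G(v)$ for all $v$. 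For (e), fix any $v$, take $z=v$, and run the algorithm only on $v_1,\dots,v_{n-1}$: using $|X_{v_i}|=d_G(v_i)$ from (a), each such vertex still has a free color, so the resulting $T$ is independent in $H$ with $|T\cap X_u|=1$ for all $u\neq v$.

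Next, (b) follows by combining (a) with the reduction. Let $z$ be non-separating and $x\in X_z$, and form $(G',X',H')=(G,H,X)/(z,x)$. Since $z$ is non-separating, $G'=G\div z$ is connected, and by Proposition~\ref{proposition_reduction} the configuration $(G',X',H')$ is again uncolorable and degree-feasible. Applying (a) to $(G',X',H')$ gives $|X'_u|=d_{G'}(u)$ for every $u\in V(G')$. On the other hand $|X'_u|=|X_u|-|N_H(x)\cap X_u|$ and, by \eqref{eq_Gdivv}, $d_{G'}(u)=d_G(u)-\mu_G(u,z)$; plugging in $|X_u|=d_G(u)$ from (a) applied to $(G,X,H)$ yields $|N_H(x)\cap X_u|=\mu_G(u,z)$. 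Ranging over all $x\in X_z$ gives (b).

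For (c) I would argue the contrapositive: a hyperedge $e$ with $S=i_G(e)$, $|S|=\ell\ge3$, that is \emph{not} a bridge makes $G$ colorable. Not being a bridge means $G-e$ has at most $\ell-1$ components, so two vertices $u,u'\in S$ lie in a common component of $G-e$; choose $w\in S\setminus\{u,u'\}$. The idea is to color $G$ by the sequential algorithm with $w$ last: then the only way the algorithm can fail at $w$ is if the colors already placed on $S\setminus\{w\}$ all lie on a single matching edge $\tilde e\in M_e$, and since $M_e$ is a matching it suffices to give $u$ and $u'$ colors not matched to each other by $M_e$. The extra connectivity between $u$ and $u'$ in $G-e$, which together with $e$ produces a cycle, is exactly what should supply a second free color at the decisive vertex so that the unique bad pairing can be dodged; making this precise -- guaranteeing two available colors where needed, possibly through a Kempe-type recoloring along the $u$--$u'$ connection -- is the step I expect to be the main obstacle. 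Once a hyperedge is known to be a bridge, the rest of (c) is immediate: parallel hyperedges $e,e'$ would leave all of $S$ connected in $G-e$ through $e'$, contradicting that $e$ is a bridge; and a bridge hyperedge forms its own block $<e>$, since its vertices are joined only through $e$.

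Finally, (d) is a corollary of (a)--(c). If $G$ is a block with $|G|\ge2$, every vertex is non-separating, so (b) applies to every pair: each $x\in X_z$ has exactly $\mu_G(z,v)$ neighbors in $X_v$, and reading this from both sides, together with Proposition~\ref{prop_feas-config}(a), shows $H[X_u\cup X_v]$ is a $\mu_G(u,v)$-regular bipartite graph with parts $X_u$ and $X_v$. For regularity of $G$, note that if $G$ contained a hyperedge $e$ then by (c) $<e>$ would be a block, forcing $G=<e>$, which is $1$-regular; otherwise $G$ is a connected graph, and counting the edges of the $\mu_G(u,v)$-regular bipartite graph $H[X_u\cup X_v]$ from both sides gives $d_G(u)=|X_u|=|X_v|=d_G(v)$ whenever $\mu_G(u,v)>0$, so connectivity propagates equality of degrees to all vertices.
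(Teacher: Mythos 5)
Your parts (a), (b), (d) and (e) are correct. The greedy argument with a connectivity ordering ending at a prescribed vertex does prove (a) and (e) directly (the paper instead proves (a) and (b) simultaneously by induction on $|G|$ via the reduction of Proposition~\ref{proposition_reduction}, and gets (e) by splitting $G-v$ into components and applying the contrapositive of (a) to each); your (b) and (d) essentially coincide with the paper's arguments. The genuine gap is in (c), and it is exactly the step you flag yourself: the plan to run the greedy algorithm with $w$ last and ``dodge'' the single bad matching edge of $M_e$ by a Kempe-type recoloring along the $u$--$u'$ connection is not carried out, and there is no reason to expect it can be. In a cover, the color classes $X_u$ of different vertices are linked by arbitrary matchings, so there is no canonical notion of ``the same color'' at two vertices and no analogue of swapping two colors along a chain; Kempe-chain arguments are precisely the kind of tool that breaks when passing from ordinary coloring to list- or DP-coloring. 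Moreover, by (a) every vertex has exactly $d_G(v)$ colors, so the greedy run gives no slack at $u$ or $u'$ that would let you steer their colors away from a prescribed pair.

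The missing idea is to not fight the matching $M_e$ at all, but to shrink the hyperedge and then invoke (a) itself as a colorability criterion. Since $e$ is not a bridge, two vertices of $i_G(e)$ lie in a common component of $G-e$; choose $v$ to be one of these two. Then the hypergraph $G'$ obtained from $G$ by replacing $e$ with the smaller edge $e-v$ (delete $v$ from its incidence set; this is still an edge since $|i_G(e)|\geq 3$) is connected. Keep $X'=X$, and obtain $H'$ from $H$ by replacing $M_e$ with its restriction $M_{e-v}$ to $\bigcup_{u \in i_G(e)\setminus\{v\}}X_u$. Then $(G',X',H')$ is a degree-feasible configuration, and by (a) applied to the original configuration, $|X'_v|=|X_v|=d_G(v)>d_G(v)-1=d_{G'}(v)$. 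By the contrapositive of (a), $(G',X',H')$ cannot be uncolorable, so it has an independent transversal $T'$. Finally, $T'$ is independent in $H$ as well: if some $\tilde{e}\in M_e$ had $i_H(\tilde{e})\subseteq T'$, then its restriction would satisfy $i_{H'}(\tilde{e}-v)\subseteq T'$, contradicting the independence of $T'$ in $H'$. This contradicts the uncolorability of $(G,X,H)$, so every hyperedge is a bridge; your derivation of the two consequences (that $<e>$ is a block and that there are no parallel hyperedges) then goes through as you wrote it.
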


\begin{proof}
We prove (a) by induction on the order of $G$. If $G$ consists of only one vertex $v$, then $X_v= \varnothing$ and $H = \varnothing$. Thus, (a) is fulfilled. Now assume $|G| \geq 2$ and choose an arbitrary vertex $v$ of $G$. As $G$ is connected, there is a non-separating vertex $z \neq v$ in $G$ and $X_z \neq \varnothing$. Let $x \in X_z$. Then, $(G',X',H')=(G,X,H)/(z,x)$ is an uncolorable degree-feasible configuration (by Proposition~\ref{proposition_reduction}). Applying the induction hypothesis then leads to $|X_v'|=d_{G'}(v)$ and we conclude

\begin{IEEEeqnarray*}{rCCCl}
d_{G'}(v) & = & |X'_v| & = & |X_v| - |N_H(x) \cap X_v| \\
&&& \geq & |X_v| - \mu_G(v,z) \geq d_G(v) - \mu_G(v,z) = d_{G'}(v).
\end{IEEEeqnarray*}
This implies $|X_v|=d_G(v)$ and $|N_H(x) \cap X_v|=\mu_G(v,z)$; thus, (a) is proven. The same argument can be applied in order to prove (b). 

For the proof of (c) assume that some hyperedge $e \in E(G)$ is not a bridge of $G$. Then, for some vertex $v \in i_G(e)$, the hypergraph $G'=(V(G),E(G) \setminus \{e\} \cup \{e - v\})$ is connected. Let $X'=X$ and let $H'$ be the hypergraph with vertex set $V(H)$ and edge set $(E(H) \setminus M_e) \cup M_{e -v}$, whereas $M_{e-v}$ denotes the restriction of $M_e$ to the vertices of $\bigcup_{u \in i_G(e) \setminus\{v\}} X_u$. Clearly, $(G',X',H')$ is a degree-feasible configuration. However, (a) implies that $|X_v'|=|X_v|=d_G(v) > d_{G'}(v)$ and so, again by (a), $(G',X',H')$ is colorable. Hence, there is an independent transversal $T'$ of $(G',X',H')$. We claim that $T'$ is also an independent transversal of $(G,X,H)$. Otherwise, by construction of $H'$ there would be an edge $\tilde{e} \in E(H)$ with $v \in i_H(\tilde{e}) \subseteq T'$. But then, $i_{H'}(\tilde{e}-v) \subseteq T'$ and so $T'$ is not an independent transversal of $H'$, a contradiction. Hence, $T'$ is an independent transversal of $(G,X,H)$ and so $(G,X,H)$ is colorable, which is impossible. This settles the case (c).

In order to prove (d), assume that $G$ is a block. If $G=<e>$ for some hyperedge $e$, then $G$ is regular and the statement clearly holds. Thus, by (c), we may assume that $G$ does not contain any hyperedge. Let $u,v$ be distinct vertices of $G$. Then, $H[X_u \cup X_v]$ is a $\mu_G(u,v)$-regular bipartite graph with parts $X_u$ and $X_v$ (by (b)). This is only possible if $|X_u|=|X_v|$. By (a), this leads to $d_G(u)=d_G(v)$ and (d) is proven.

Finally, for the proof of (e), let $v$ be an arbitrary vertex of $G$. Let $U$ be the vertex set of a component of $G-v$, let $X'$ be the restriction of $X$ to $U$, and let $H'=H[\bigcup_{u \in U} X_u]$. Then, $(G[U],X',H')$ is a degree-feasible configuration and, as $G$ is connected, it holds $|X_u| = d_G(u) > d_{G[U]}(u)$ for at least one vertex $u \in U$. Hence, there is an independent transversal $T_u$ of $(X',H')$ (by (a)). Let $T$ be the union of the transversals $T_U$ over all components $G[U]$ of $G-v$. Clearly, $T$ is an independent set of $H$ such that $|T \cap X_w|=1$ for all $w \in V(G) \setminus \{v\}$. This proves (e).
\end{proof}

Finally, we connect the concept of being minimal uncolorable with the merging operation.

\begin{proposition} \label{prop_minimal-uncolorable}
Let $(G,X,H)$ be obtained from two disjoint degree-feasible configurations $(G^1,X^1,H^1)$ and $(G^2,X^2,H^2)$ by merging $v^1 \in V(G^1)$ and $v^2 \in V(G^2)$ to a new vertex $v^*$. Then, $(G,X,H)$ is a degree-feasible configuration and the following conditions are equivalent:
\begin{itemize}
\item[\upshape (a)] Both $(G^1,X^1,H^1)$ and $(G^2,X^2,H^2)$ are minimal uncolorable.
\item[\upshape (b)] $(G,X,H)$ is minimal uncolorable.
\end{itemize}
\end{proposition}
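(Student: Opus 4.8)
The plan is to prove the equivalence of the two statements by analyzing how independent transversals of the merged configuration $(G,X,H)$ relate to independent transversals of the two constituent configurations. The fundamental observation driving the whole argument is that since $H = H^1 \cup H^2$ with $V(H^1) \cap V(H^2) = \varnothing$, and since the merging only identifies the color classes $X^1_{v^1}$ and $X^2_{v^2}$ at the new vertex $v^*$ (so that $X_{v^*} = X^1_{v^1} \cup X^2_{v^2}$), the edge sets of $H^1$ and $H^2$ never interact. Consequently, a transversal $T$ of $(X,H)$ decomposes naturally: its restriction to the colors of $G^1$ together with a chosen color at $v^*$ lying in $X^1_{v^1}$ gives a candidate transversal of $(X^1,H^1)$, and symmetrically for $G^2$. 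The key technical point is that $T$ picks exactly one color at $v^*$, and that color belongs to exactly one of the two parts $X^1_{v^1}$ or $X^2_{v^2}$.

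I would first make precise the correspondence between colorings. Given an independent transversal $T$ of $(X,H)$, the color chosen at $v^*$, call it $x^*$, lies in (say) $X^1_{v^1}$. Then $T \cap V(H^1)$ is an independent transversal of $(X^1, H^1)$. However, on the $G^2$ side we cannot directly produce an independent transversal because $x^* \notin X^2_{v^2}$ means $T$ contains no color for $v^2$; but $T \cap V(H^2)$ together with \emph{any} choice of color in $X^2_{v^2}$ need not be independent. This asymmetry is exactly what the equivalence must exploit. The clean statement I would aim for is: $(G,X,H)$ is colorable if and only if at least one of $(G^1,X^1,H^1)$, $(G^2,X^2,H^2)$ is colorable. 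For the forward direction, if $T$ is an independent transversal with $x^* \in X^1_{v^1}$, then $T \cap V(H^1)$ witnesses colorability of the first configuration. For the converse, if say $(G^1,X^1,H^1)$ is colorable via some $T^1$, then using Proposition~\ref{prop_main-proposition}(e) applied to $(G^2,X^2,H^2)$ at the vertex $v^2$, I can find an independent set in $H^2$ hitting every $X^2_u$ for $u \neq v^2$ exactly once, and avoiding $v^2$ entirely; gluing this to $T^1$ (whose color at $v^*$ lives in $X^1_{v^1} \subseteq X_{v^*}$) yields an independent transversal of $(X,H)$.

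Once this colorability dichotomy is established, the equivalence (a)$\Leftrightarrow$(b) follows by combining it with a minimality analysis. For (a)$\Rightarrow$(b): if both pieces are uncolorable, then by the dichotomy $(G,X,H)$ is uncolorable; and for minimality I must show that deleting any edge $e \in E(H)$ makes it colorable. Since $e$ lies in exactly one $H^i$ (say $H^1$), deleting it makes $(G^1,X^1,H^1-e)$ colorable by the minimality of the first piece, and then the dichotomy (applied to $H^1 - e$ and $H^2$) gives colorability of $(G,X,H-e)$. For (b)$\Rightarrow$(a): I argue the contrapositive. If, say, $(G^1,X^1,H^1)$ were colorable, the dichotomy would make $(G,X,H)$ colorable, contradicting uncolorability; so both pieces are uncolorable. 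For minimality of each piece, suppose deleting some $e \in E(H^1)$ left $(G^1,X^1,H^1-e)$ uncolorable; then by the dichotomy $(G,X,H-e)$ would be uncolorable, contradicting the minimal uncolorability of $(G,X,H)$.

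The main obstacle I anticipate is the converse direction of the colorability dichotomy, specifically the gluing step, where I must produce an independent transversal of the merged configuration from a coloring of just one side. The subtlety is that the color chosen at $v^*$ is dictated by the colorable side, so the other side must be colored \emph{while omitting} its merge vertex $v^2$; this is precisely the content of Proposition~\ref{prop_main-proposition}(e), and I would need to verify that its hypotheses apply, namely that $(G^2,X^2,H^2)$ is an uncolorable degree-feasible configuration so that part (e) is available. I should also take care that $v^1$ (equivalently $v^2$) is a non-separating vertex of its own configuration only if needed, but in fact part (e) of Proposition~\ref{prop_main-proposition} requires no such hypothesis on the omitted vertex, which is what makes the argument go through cleanly.
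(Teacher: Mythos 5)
Your overall architecture is sound and, modulo packaging, it is essentially the paper's own argument: both rest on the vertex-disjointness of $H^1$ and $H^2$, the restriction of a transversal of the merged cover to one side, and Proposition~\ref{prop_main-proposition}(e) to color the other side while omitting its merge vertex. But there is one genuine gap, and you half-noticed it yourself. Your colorability dichotomy asserts that $(G,X,H)$ is colorable if and only if at least one of $(G^1,X^1,H^1)$, $(G^2,X^2,H^2)$ is colorable, and your proof of the ``if'' direction produces the partial coloring of the second side by applying Proposition~\ref{prop_main-proposition}(e) to $(G^2,X^2,H^2)$, which requires that configuration to be \emph{uncolorable}. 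In the one place where you truly need the dichotomy in this generality --- the step of (b)$\Rightarrow$(a) where you assume $(G^1,X^1,H^1)$ is colorable and want to contradict the uncolorability of $(G,X,H)$ --- you do not yet know anything about the status of $(G^2,X^2,H^2)$; if it happens to be colorable, part (e) is simply not available and your gluing step collapses. So the verification you defer (``I would need to verify that its hypotheses apply'') cannot in general succeed as stated.

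The gap is easy to close, in either of two ways. (i) Add the missing case: if $(G^2,X^2,H^2)$ is colorable via a transversal $T^2$, then $T^2 \setminus X^2_{v^2}$ is still independent and meets every $X^2_u$ with $u \neq v^2$ exactly once, so it glues onto $T^1$ just as well; together with your Proposition~\ref{prop_main-proposition}(e) case this proves the dichotomy in full. (ii) Do what the paper does: instead of applying part (e) to the piece $(G^2,X^2,H^2)$, apply it to the merged configuration $(G,X,H)$ itself at the vertex $v^*$ --- that configuration is uncolorable by hypothesis (b), so the hypotheses of part (e) are honestly satisfied --- and then restrict the resulting independent set to $V(H^2)$. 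Everything else in your plan is fine as it stands: the (a)$\Rightarrow$(b) minimality step invokes the dichotomy only with $(G^2,X^2,H^2)$ already known uncolorable from (a), and the two remaining steps use only the easy restriction direction of the dichotomy, which needs no appeal to Proposition~\ref{prop_main-proposition}(e) at all.
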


\begin{proof}
First we show that (a) implies (b). Assume that $(G,X,H)$ is colorable. Then, there is an independent transversal $T$ of $(X,H)$, that is, an independent set of $H$ such that $|T \cap X_u|=1$ for all $u \in V(H)$. As $X_{v^*}=X_{v^1} \cup X_{v^2}$, this implies (by symmetry) that $|T \cap X_{v^1}|=1$. As a consequence, $T^1=T \cap V(H^1)$ is an independent transversal of $(X^1,H^1)$ and so $(G^1,X^1,H^1)$ is colorable, a contradiction to (a). Thus, $(G,X,H)$ is uncolorable. Let $e \in E(H)$ be an arbitrary (hyper)-edge. By the structure of $H=H^1 \cup H^2$, we may assume that $e \in E(H^1)$. Due to the fact that $(G^1,X^1,H^1)$ is minimal uncolorable, there is an independent transversal $T^1$ of the cover $(X^1,H^1)$. Since $(G^2,X^2,H^2)$ is also minimal uncolorable and as $G^2$ is connected, it follows from Proposition~\ref{prop_main-proposition}(e) that there is an independent set $T^2$ in $H^2$ satisfying $|T^2 \cap X_u^2|=1$ for all $u \in V(G^2) \setminus \{v^2\}$. However, as $H=H^1 \cup H^2$ and $H_1 \cap H_2 = \varnothing$, the set $T=T^1 \cup T^2$ is an independent transversal of $(X,H-e)$ and so $(G,X,H-e)$ is colorable. Thus, (b) holds.

In order to prove that (a) can be deduced from (b), we only need to show that $(G^1,X^1,H^1)$ is minimal uncolorable (by symmetry). First assume that $(G^1,X^1,H^1)$ is colorable, that is, $(X^1,H^1)$ has an independent transversal $T^1$. Since $(G,X,H)$ is minimal uncolorable and connected and as $H^2-X_{v^2}$ is a subhypergraph of $H$, Proposition~\ref{prop_main-proposition}(d) implies that there is an independent set $T^2$ in $H^2-X_{v^2}$ such that $|T^2 \cap X_u^2|=1$ for all $u \in V(G^2) \setminus \{v^2\}$. Then again, $T=T^1 \cup T^2$ is an independent transversal of $(X,H)$, contradicting (b). Thus, $(G^1,X^1,H^1)$ is uncolorable. Now let $e \in E(H^1)$ be an arbitrary edge. Then, as $(G,X,H)$ is minimal uncolorable, there is an independent transversal $T$ of $(X,H-e)$ and $T^1=T \cap V(H^1)$ clearly is an independent transversal of $H^1$. Consequently, $(G^1,X^1,H^1-e)$ is colorable and the proof is complete. 
\end{proof}

\subsection{Proof of Theorem~\ref{theorem_main-result}}
Before proving it, let us again state the main result of this paper.

\setcounter{theorem}{1}
\begin{theorem}
Let $(G,X,H)$ be a degree-feasible configuration. Then, $(G,X,H)$ is minimal uncolorable if and only if $(G,X,H)$ is constructible.
\end{theorem}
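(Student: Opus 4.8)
The plan is to prove both implications by induction, the forward one being a short induction on the construction and the reverse one a more substantial induction on $|G|$ that splits into a cut-vertex case and a block case; the latter is the Brooks/Gallai-type heart of the argument.

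For the forward direction (constructible $\Rightarrow$ minimal uncolorable) I would induct on the number of merging steps used to build $(G,X,H)$. The base configurations---the K-, C-, and E-configurations---are minimal uncolorable degree-feasible configurations, as already noted when they were introduced. In the inductive step a constructible configuration is a merge of two smaller constructible configurations, which are minimal uncolorable by the induction hypothesis, and Proposition~\ref{prop_minimal-uncolorable}, direction (a)$\Rightarrow$(b), immediately yields that the merge is minimal uncolorable.

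For the reverse direction (minimal uncolorable $\Rightarrow$ constructible) I would induct on $|G|$. The base case $|G|=1$ is $G=K_1$, $X=\varnothing$, $H=\varnothing$, a K-configuration with $n=1$. For $|G|\ge 2$ I distinguish whether $G$ has a separating vertex. In the \emph{cut-vertex case}, write $G=G_1\cup G_2$ with $V(G_1)\cap V(G_2)=\{v^*\}$ and each $|G_i|\ge 2$. Since every edge lies entirely in $G_1$ or in $G_2$, the hypergraph $H$ is the union of the two induced subhypergraphs on the colour sets of the two sides, overlapping only in $X_{v^*}$. Each restriction is degree-feasible and, being strictly over-feasible at $v^*$ (as $|X_{v^*}|=d_G(v^*)=d_{G_1}(v^*)+d_{G_2}(v^*)$ by Proposition~\ref{prop_main-proposition}(a)), is colorable by the same Proposition~\ref{prop_main-proposition}(a). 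Moreover a $G_1$-transversal and a $G_2$-transversal can never agree at $v^*$, since their union would otherwise color $(G,X,H)$. Using this disjointness together with Proposition~\ref{prop_main-proposition}(a),(e), I would show that $X_{v^*}$ partitions as $P_1\sqcup P_2$ with $|P_i|=d_{G_i}(v^*)$ and with every colour of $P_i$ touched only by $G_i$-edges of $H$, which exhibits $(G,X,H)$ as the merge of two disjoint degree-feasible configurations on $G_1$ and $G_2$. By Proposition~\ref{prop_minimal-uncolorable}, (b)$\Rightarrow$(a), both are minimal uncolorable; by the induction hypothesis both are constructible; hence so is their merge.

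In the \emph{block case}, if $G$ contains a hyperedge then Proposition~\ref{prop_main-proposition}(c) forces $G=<e>$, whereupon $|X_v|=d_G(v)=1$ and minimality make $(G,X,H)$ an E-configuration. Otherwise $G$ is a $2$-connected multigraph; by Proposition~\ref{prop_main-proposition}(d) it is $r$-regular with $|X_v|=r$ for all $v$, and $H[X_u\cup X_v]$ is a $\mu_G(u,v)$-regular bipartite graph for each adjacent pair. To classify $G$, I would pick any vertex $z$ (non-separating, as $G$ is a block) and a colour $x\in X_z$ and pass to $(G',X',H')=(G,X,H)/(z,x)$ with $G'=G-z$ via Proposition~\ref{proposition_reduction}; this is a smaller uncolorable degree-feasible configuration, hence constructible by the induction hypothesis, so the blocks of $G-z$ are DP-bricks, i.e. $tK_m$ or $tC_m$. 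It then remains to reconstruct $G$: a Brooks/Gallai-type analysis of how $z$ attaches to the tree of bricks $G-z$, constrained by $2$-connectivity, $r$-regularity, and the regular-bipartite structure of the cover, should force $G-z$ to be a single brick and $G$ itself to be $tK_n$ or $tC_n$ with exactly the canonical K- or C-configuration cover (here Proposition~\ref{prop_block-structure} describes the target structure to match). I expect this block classification to be the main obstacle, as it is the genuine Brooks'-theorem analogue where the rigidity of the cover must be pinned down; the secondary technical difficulty is the clean splitting of $X_{v^*}$ in the cut-vertex case, namely showing that no colour of $v^*$ is touched by edges of both sides.
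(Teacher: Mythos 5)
Your skeleton is the paper's: the forward direction by induction on the merging construction via Proposition~\ref{prop_minimal-uncolorable}, and the converse by induction on $|G|$ with the separating-vertex/block dichotomy. But in both cases the step you defer is exactly where the content lies, and what you propose to close it with is insufficient. In the cut-vertex case, your key claim that every colour of $P_i$ is touched only by $G_i$-edges of $H$ cannot follow from disjointness plus Proposition~\ref{prop_main-proposition}(a),(e): it is false for uncolorable degree-feasible configurations in general, because a matching $M_e$ of an edge $e\in E(G^2)$ incident with $v^*$ may perfectly well contain an edge of $H$ using a colour from $P_1$, and adding such a cross edge preserves uncolorability and degree-feasibility. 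The property is a consequence of \emph{minimality}, and that is how the paper obtains it: one forms the spanning subhypergraph $H'=H^1\cup H^2$ by discarding all cross edges, checks directly that $(G,X,H')$ is still uncolorable, and then concludes $H=H'$ because otherwise $(G,X,H)$ would not be minimal uncolorable (via Proposition~\ref{prop_feas-config}(b)). Only after that can Proposition~\ref{prop_minimal-uncolorable} be applied; your toolkit for this step omits the one hypothesis that makes it true.

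In the block case you only gesture at the classification, and your one concrete prediction is wrong: nothing forces $G-z$ to be a single brick, since for $G=tC_n$ ($n\ge 4$) the reduced graph $G-z=tP_{n-1}$ has $n-2$ blocks, all $tK_2$'s. The paper's Claim~\ref{claim_dp-brick} argues differently, on the set $S$ of vertices $v$ for which $G-v$ is a block: if $S\neq\varnothing$, regularity of $G$ and of the brick $G-v$ forces $\mu_G(u,v)$ to be constant in $u$, whence $S=V(G)$ and $G=tK_n$; if $S=\varnothing$, an end-block analysis shows every end-block of $G-v$ is a $t_BK_2$, so $G$ is a multicycle, and the residual possibility of an $(s,t)$-multicycle with $s\neq t$ is excluded by playing the $K_{t,t}$ block structure coming from Proposition~\ref{prop_block-structure} against the $t$-regular bipartite graphs of Proposition~\ref{prop_main-proposition}(d). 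Even once $G=tK_n$ or $G=tC_n$ is known, identifying the cover as a K- or C-configuration (the paper's Claims 3 and 4) is a separate reconstruction: one varies the colour $x\in X_v$, determines $N_H(x)$ each time, and shows the partitions inherited from the reduced constructible configuration glue to partitions of all the sets $X_u$ carrying the $K_{(n,t)}$, respectively the (possibly twisted) $K_{t,t}$, structure, with the even/odd parity distinction emerging from how $N_H(x)$ meets the two ends of the path. None of this is routine, and none of it appears in your proposal beyond the statement that it should work; the proof is incomplete precisely at its Brooks-type core.
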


\begin{proof}
If $(G,X,H)$ is constructible, then $(G,X,H)$ is minimal uncolorable (by Proposition~\ref{prop_minimal-uncolorable} and as each K,C and E-configuration is a minimal uncolorable degree-feasible configuration). Let $(G,X,H)$ be a minimal uncolorable degree-feasible configuration. We prove that $(G,X,H)$ is constructable by induction on the order of $G$. Clearly, if $|G|=1$, then $X=\varnothing$, $H=\varnothing$ and $(G,X,H)$ is a K-configuration. Assume that $|G| \geq 2$. By Proposition~\ref{prop_main-proposition}(a), it holds
\begin{align}\label{eq_degree}
|X_v|=d_G(v)
\end{align}
for each vertex $v \in V(G)$. We distinguish between two cases. 

\case{1}{$G$ contains a separating vertex $v^*$.} Then, $G$ is the union of two connected induced subhypergraphs $G^1$ and $G^2$ with $V(G^1) \cap V(G^2) = \{v^*\}$ and $|G^j| < |G|$ for $j \in \{1,2\}$. For $j \in \{1,2\}$, by $\mathcal{T}^j$ we denote the set of all independent sets $T$ of $H$ such that $|T \cap X_v|=1$ for all $v \in V(G^j)$. By Proposition~\ref{prop_main-proposition}(e), both $\mathcal{T}^1$ and $\mathcal{T}^2$ are non-empty. For $j \in \{1,2\}$, let $X_j$ be the set of all vertices of $X_{v^*}$ that do not occur in any independent set from $\mathcal{T}^j$. Then, $X_{v^*}=X_1 \cup X_2$. Suppose, to the contrary, that there is a vertex $u \in X_{v^*} \setminus (X_1 \cup X_2)$. Then, $u$ is contained in two independent sets $T^j \in \mathcal{T}^j (j=1,2)$ and $T=T^1 \cup T^2$ would be an independent transversal of $(X,H)$. This is due to the fact that each hyperedge of $G$ is contained in $G^j$ for some $j \in \{1,2\}$ and that for $u \in V(G^1) \setminus \{v^*\}$ and $v \in V(G^2) \setminus \{v^*\}$ we have $\mu_G(u,v)=0$ and so $E_H(X_u,X_v)= \varnothing$ (by Proposition~\ref{prop_feas-config}(a)). Thus, $(G,X,H)$ is colorable, a contradiction. Consequently, $X_{v^*}=X_1 \cup X_2$. For $j \in \{1,2\}$, let $(X^j,H^j)$ be a cover of $G^j$ as follows. For $v \in V(G^j)$, let
$$X_v^j=\begin{cases}
X_v & \text{if } v \neq v^* \\
X_j & \text{if } v = v^*,
\end{cases}$$
and let $H^j=H[\bigcup_{v \in V(G^j)} X_v^j]$. Then, $(G^j,X^j,H^j)$ is a feasible configuration and, by definition of $X_j=X_{v^*}^j$, $(G^j,X^j,H^j)$ is uncolorable. Moreover, for each vertex $v \in V(G^j) \setminus \{v^*\}$ it holds $|X_v|=d_G(v)=d_{G^j}(v)$ (by \eqref{eq_degree}). As $(G^j,X^j,H^j)$ is uncolorable, it follows from Proposition~\ref{prop_main-proposition}(a) that $|X_{v^*}^j| \leq d_{G^j}(v^*)$ for $j \in \{1,2\}$. Since $X_{v^*}=X_1 \cup X_2 = X^1_{v^*} \cup X^2_{v^*}$ we conclude from \eqref{eq_degree} that
$$|X_{v^*}^1| + |X_{v^*}^2| \geq |X_{v^*}^1 \cup X_{v^*}^2| = |X_{v^*}| = d_{G}(v^*) = d_{G^1}(v^*) + d_{G^2}(v^*),$$
and, thus, $|X_{v^*}^j|=d_{G^j}(v^*)$ and $X_{v*}^1 \cap X_{v*}^2 = \varnothing$. Hence, $(G^j,X^j,H^j)$ is a degree-feasible configuration. Moreover, $H'=H^1 \cup H^2$ is a spanning subhypergraph of $H$ and $V(H^1) \cap V(H^2) = \varnothing$. So, $(G,X,H')$ is a degree-feasible configuration (by Proposition~\ref{prop_feas-config}(b)) and $(G,X,H')$ is obtained from two ismorphic copies of $(G^1,X^1,H^1)$ and $(G^2,X^2,H^2)$ by the merging operation. Clearly, $(G,X,H')$ is uncolorable. Otherwise, there would exist an independent transversal $T$ of $(X,H')$ and, by symmetry, $T$ would contain a vertex of $X_{v^*}^1$. But then, $T^1=T \cap V(H^1)$ would be an independent transversal of $(X^1,H^1)$, which is impossible. As $(G,X,H)$ is minimal uncolorable and as $H'$ is a spanning subhypergraph of $H$, this implies that $H=H'$ and $(G,X,H)$ is obtained from two isomorphic copies of $(G^1,X^1,H^1)$ and $(G^2,X^2,H^2)$ by the merging operation. By Proposition~\ref{prop_minimal-uncolorable}, both $(G^1,X^1,H^1)$ and $(G^2,X^2,H^2)$ are minimal uncolorable (and also degree-feasible). Applying the induction hypotheses leads to $(G^j,X^j,H^j)$ being constructible for $j \in \{1,2\}$, and so $(G,X,H)$ is constructible. Thus, the first case is complete.

\case{2}{$G$ is a block.} If $G$ contains any hyperedge $e$, then it follows from Proposition~\ref{prop_main-proposition}(c) that $G=<e>$ and $(G,X,H)$ is not colorable if and only if $(G,X,H)$ is an E-configuration. Thus, in the following we may assume that $G$ does not contain any hyperedges. We prove that $(G,X,H)$ is either a K-configuration or a C-configuration. This is done via a sequence of claims.

\begin{claim} \label{claim_g-v}
Let $v$ be an arbitrary vertex of $G$, let $x \in X_v$ be an arbitrary color, and let $(G',X',H')=(G,X,H)/(v,x)$. Then, there is a spanning subhypergraph $\tilde{H}$ of $H'$ such that $(G,X,\tilde{H})$ is minimal uncolorable. Moreover, $(G',X',\tilde{H})$ is constructible and so each block of $G'=G-v$ is a {\upshape DP}-brick.
\end{claim}
\begin{proof2}
Since $|G| \geq 2$ and $G$ is connected, $X_v \neq \varnothing$ (by \eqref{eq_degree}). Thus, $(G',X',H')=(G,X,H)/(v,x)$ is an uncolorable degree-feasible configuration (by Proposition~\ref{proposition_reduction}) and, therefore, there is a spanning subhypergraph $\tilde{H}$ of $H'$ such that $(G',X',\tilde{H})$ is minimal uncolorable. Then, the induction hypothesis implies that $(G',X',\tilde{H})$ is constructible, and, as $G'=G-v$, this particularly implies that each block of $G'$ is a DP-brick (since $G$ does not contain any hyperedge).
\end{proof2}

By a \textbf{multicycle} or \textbf{multipath} we mean a multigraph that can be obtained from a cycle, respectively a path, by replacing each edge $e$ of the cycle or path by a set of $t_e$ parallel edges, where $t_e \geq 1$. Given integers $s,t \geq 1$, we say that a graph $H$ is an $(s,t)$\textbf{-multicycle} if $H$ can be obtained from an even cycle $C$ by replacing each edge of a perfect matching of $C$ by a set of $s$ parallel edges and each other edge of $C$ by a set of $t$ parallel edges. Clearly, each $(s,t)$-multicycle is $r$-regular for $r=s+t$. Moreover, if $H$ is a regular multicycle, then either $H=tC_n$ for some integers $t \geq 1$ and $n \geq 3$, or $H$ is an $(s,t)$-multicycle for some integers $s,t \geq 1$.

\begin{claim} \label{claim_dp-brick}
The graph $G$ is a \upshape{DP}-brick.
\end{claim}
\begin{proof2}
Since $G$ is a block, Proposition~\ref{prop_main-proposition}(d) implies that $G$ is $r$-regular for some integer $r \geq 1$. For any vertex $v$ of $G$, each block of $G-v$ is a DP-brick (by Claim~\ref{claim_g-v}). Let $S$ denote the set of all vertices $v$ of $G$ such that $G-v$ is a block. Then, for every vertex $v \in S$, $G-v$ is a DP-brick and, therefore, regular. As $G$ is regular, too, for $v \in S$ there must be an integer $t_v \geq 1$ such that $\mu_G(u,v)=t_v$ for all $u \in V(G) \setminus \{v\}$. As a consequence, $S=V(G)$ and it clearly holds $t_v=t$ for all $v \in V(G)$. Thus, $G=tK_n$ with $n=|G|$.

It remains to consider the case that $S=\varnothing$. Let $v$ be an arbitrary vertex of $G$. Then, $G-v$ has at least two end-blocks and each block of $G-v$ is a DP-brick and therefore regular. Let $B$ be an arbitrary end-block of $G-v$. Then, $B$ is $t_B$-regular for some $t_B \geq 1$ and $B$ contains exactly one separating vertex $v_B$ of $G-v$. As $G$ is $r$-regular, there is an integer $s_B$ such that $\mu_G(u,v)=s_B$ for all vertices $u \in V(B) \setminus \{v_B\}$. As a consequence, $|B|=2$, since otherwise every vertex of $B-v_B$ belongs to $S$ and so $S \neq \varnothing$, which is impossible. Hence, $B=t_BK_2$, $r=t_B+s_B$, $V(B)=\{v',v_B\}$, and $N_G(v')=\{v,v_B\}$. Repeating the above argumentation with $v'$ instead of $v$ proves that $G$ is a multicycle. Since $G$ is regular, this implies that either $G=tC_n$ with $t \geq 1$ and $n \geq 3$, or $G$ is an $(s,t)$-multicycle with $s \neq t$. If $G=tC_n$, we are done. We prove that $G$ cannot be an $(s,t)$-multicycle by reductio ad absurdum. By symmetry, we may assume $1 \leq s < t$. By \eqref{eq_degree}, for each vertex $v$ we have $|X_v|=s+t$. Let $v \in V(G)$. Then, $G-v$ is a multipath and one end-block of $G-v$, say $B$, is a $tK_2$. Then, $B$ consists of two vertices $u$ and $w$ with $d_{G-v}(u)=t$ and $d_{G-v}(w)=s+t$. Let $x \in X_v$ be an arbitrary color and set $(G',X',H')=(G,X,H)/(v,x)$. Then, there is a spanning subgraph $\tilde{H}$ of $H'$ such that $(G',X',\tilde{H})$ is constructible (by Claim~\ref{claim_g-v}). Moreover, \eqref{eq_degree} together with Proposition~\ref{prop_block-structure} implies that $|X_{u}'|=t, |X_{w}'|=s+t$ and that there is a subset $X_w^1$ of $X_w'$ such that $|X^1_w|=t$ and $H^1=H[X_u' \cup X_w^1]$ is a $K_{t,t}$ with parts $X_u'$ and $X_w^1$. The graph $H^1$ is a subgraph of $H^2=H[X_u \cup X_w]$, and $H^2$ is a $t$-regular bipartite graph with parts $X_u$ and $X_w$ (by Proposition~\ref{prop_main-proposition}(d)). Since $|X_u|=|X_w|=s+t$ and $1 \leq s < t$, this is impossible and the claim is proven. 
\end{proof2}

By Claim~\ref{claim_dp-brick}, $G$ is either a $tK_n$ with $t \geq 1$ and $n \geq 2$, or $G=tC_n$ with $t \geq 1$ and $n \geq 4$. In order to complete the proof we show that in the first case, $(G,X,H)$ is a K-configuration, and, in the second case, $(G,X,H)$ is a C-configuration.

\begin{claim}
If $G=tK_n$ for integers $t \geq 1$ and $n \geq 2$, then $(G,X,H)$ is a {\upshape K}-configuration. 
\end{claim}

\begin{proof2}
Since $(G,X,H)$ is minimal uncolorable, for each vertex $v$ of $G$ and each pair $u,w$ of distinct vertices of $G$, it holds
\begin{itemize}
\item[(a)] $|X_v|=t(n-1)$ and $H[X_u \cup X_w]$ is a $t$-regular bipartite graph with parts $X_u$ and $X_w$
\end{itemize}
(by \eqref{eq_degree} and by Proposition~\ref{prop_main-proposition}(d)). If $n=2$, then $G$ has exactly two vertices, say $u$ and $w$, and $H[X_u \cup X_w]$ is a $K_{t,t}$ (by (a)), and so $(G,X,H)$ is a K-configuration as claimed. 

Now assume that $n \geq 3$. Let $v$ be an arbitrary vertex of $G$, and let $x \in X_v$ be an arbitrary color. Moreover, let $(G',X',H')=(G,X,H)/(v,x)$. Then, there is a spanning subgraph $\tilde{H}$ of $H'= H - (X_v \cup N_H(x))$ such that $(G',X',\tilde{H})$ is a constructible configuration (by Claim~\ref{claim_g-v}). As $G'=G-v=tK_{n-1}$, $(G',X',\tilde{H})$ is a K-configuration. Consequently, for every vertex $u \in V(G)$, there is a partition $(X_u^1,X_u^2,\ldots,X_u^{n-2})$ of $X_u'=X_u \setminus N_H(x)$ such that, for $i \in \{1,2,\ldots,n-2\}$, 
\begin{itemize}
\item[(b)] the graph $H^i=\tilde{H}[\bigcup_{u \in V(G')} X_u^i]$ is a $K_{(n-1,t)}$ whose partite sets are the sets $X_u^i$ with $u \in V(G')$, and $\tilde{H}=H^1 \cup H^2 \cup \ldots \cup H^{n-2}$.
\end{itemize}
For $u \in V(G')$ let $X_u^{n-1} = X_u \setminus X_u'$. Then, for every vertex $u \in V(G')$, $|X_u^{n-1}|=t$ and $(X_u^1,X_u^2, \ldots, X_u^{n-1})$ is a partition of $X_u$. Since $\tilde{H}$ is a spanning subgraph of $H'$, it follows from (a) and (b) that $H^i$ is an induced subgraph of $H$ (for $i \in \{1,2,\ldots,n-2\})$, and the graph 
$$H^{n-1}=H[\bigcup_{u \in V(G')}X_u^{n-1}]$$
is a $K_{(n-1,t)}$ whose partite sets are the sets $X_u^{n-1}$ with $u \in V(G')$. Moreover,
$$H - X_v = H^1 \cup H^2 \cup \ldots \cup H^{n-1}, \text{ and } N_H(x) = V(H^{n-1}).$$
Since the color $x \in X_v$ was chosen arbitrarily, this implies that for each $x \in X_v$ there is an index $i \in \{1,2,\ldots,n-1\}$ such that $N_H(x)=V(H^i)$, and, by (a) and (b), for each index $i \in \{1,2,\ldots,n-1\}$ there are exactly $t$ colors $x$ from $X_v$ such that $N_H(x)=V(H^i)$. As a consequence, there is a partition $(X_v^1,X_v^2, \ldots,X_v^{n-1})$ of $X_v$ such that $|X_v^i|=t$ and $N_H(x)=V(H^i)$ for $x \in X_v^i$ and for $i \in \{1,2,\ldots,n-1\}$. Hence, for $i \in \{1,2,\ldots,n\}$, the graph
$$H_i=H[\bigcup_{u \in V(G)}X_u^i]$$
is a $K_{(n,t)}$ whose partite sets are the sets $X_u^i$ with $u \in V(G)$, and, moreover, $H=H_1 \cup H_2 \cup \ldots \cup H_n$. Thus, $(G,X,H)$ is a K-configuration.
\end{proof2}

\begin{claim}
If $G=tC_n$ for integers $t \geq 1$ and $n \geq 4$, then $(G,X,H)$ is a {\upshape C}-configuration. 
\end{claim}

\begin{proof2}
Since $(G,X,H)$ is minimal uncolorable, for each vertex $v \in V(G)$ and each $2$-set $\{u,w\} \in A(G)$, it holds
\begin{itemize}
\item[(a)] $|X_v|=2t$ and $H[X_u \cup X_w]$ is a $t$-regular bipartite graph with parts $X_u$ and $X_w$
\end{itemize}
(by \eqref{eq_degree} and by Proposition~\ref{prop_main-proposition}(d)). Let $v$ be an arbitrary vertex of $G$, and let $x \in X_v$ be an arbitrary color. Moreover, let $(G',X',H')=(G,X,H)/(v,x)$. Then, there is a spanning subgraph $\tilde{H}$ of $H'=H-(X_v \cup N_H(x))$ such that $(G',X',\tilde{H})$ is a constructible configuration (by Claim~\ref{claim_g-v}). Since $G'=G-v=tP_{n-1}$, the vertices of $G'$ can be arranged in a sequence, say $v_1,v_2,\ldots,v_{n-1}$, such that two vertices are adjacent in $G'$ if and only if they are consecutive in the sequence. Note that $N_G(v)=\{v_1,v_{n-1}\}$ and each block of $G'$ is a $tK_2$. We claim that for each vertex $u$ of $G'$ there is a partition $(X_u^1,X_u^2)$ of $X_u$ such that the following conditions hold:
\begin{itemize}
\item[(b)] For every $i \in \{1,2\}$ and every $k \in \{1,2,\ldots,n-2\}$, the graph $H^i_k=H[X^i_{v_k}\cup X^i_{v_{k+1}}]$ is a $K_{t,t}$ whose partite sets are $X^i_{v_k}$ and $X^i_{v_{k+1}}$.
\item[(c)] The graph $H - X_v$ is the union of all graphs $H^i_k$ with $i \in \{1,2\}$ and $k \in \{1,2,\ldots,n-2\}$.
\item[(d)] If $n$ is even, then $N_H(x)=X^1_{v_1} \cup X^2_{v_{n-1}}$, or $N_H(x) = X^2_{v_1} \cup X^1_{v_{n-1}}$.
\item[(e)] If $n$ is odd, then $N_H(x)=X_{v_1}^1 \cup X_{v_{n-1}}^1$, or $N_H(x)=X^2_{v_1} \cup X^2_{v_{n-1}}$.
\end{itemize}
For $k \in \{1,2,\ldots,n-2\}$, the graph $B^k=G[\{v_k,v_{k+1}\}]$ is a block of $G'$. Clearly, $\mathcal{B}(G')=\{B^1,B^2,\ldots,B^{n-2}\}$ and the only end-blocks of $G'$ are $B^1$ and $B^{n-2}$. Since $(G',X',\tilde{H})$ is a constructible configuration and since each block of $G'$ is a $tK_2$, it follows from Proposition~\ref{prop_block-structure} that for each $k \in \{1,2,\ldots,n-2\}$ there is a uniquely determined cover $(\tilde{X}^k,\tilde{H}^k)$ of $B^k$ such that
\begin{itemize}
\item $\tilde{H}^k$ is a $K_{t,t}$ with parts $\tilde{X}^k_{v_k}$ and $\tilde{X}^k_{v_{k+1}}$,
\item $\tilde{H}$ is the disjoint union of the graphs $\tilde{H^1}, \tilde{H^2},\ldots,\tilde{H}^{n-2}$,
\item $X'_{v_1}=\tilde{X}^1_{v_1}$, $X_{v_k}'=\tilde{X}_{v_k}^{k-1} \cup \tilde{X}^k_{v_k} (k \in \{2,\ldots,n-2\})$, and $X_{v_{n-1}}'=\tilde{X}^{n-2}_{v_{n-1}}$.
\end{itemize}
Since $\{v_k,v_{k+1}\} \in A(G)$ for $k \in \{1,2,\ldots,n-2\}$, it follows from (a) that $\tilde{H}^k$ is an induced subgraph of $H$. Let $\tilde{X}^0_{v_1}=X_{v_1} \setminus X_{v_1}'$ and $\tilde{X}^{n-1}_{v_{n-1}}=X_{v_{n-1}} \setminus X'_{v_{n-1}}$.Then, both sets $\tilde{X}^0_{v_1}$ and $\tilde{X}^{n-1}_{v_{n-1}}$ have exactly $t$ elements, and $N_H(x)=\tilde{X}^0_{v_1} \cup \tilde{X}^{n-1}_{v_{n-1}}$. Furthermore, we conclude from (a) that, for $k \in \{1,2,\ldots,n-2\}$, 
\begin{itemize}
\item the graph $H[\tilde{X}^{k-1}_{v_k} \cup \tilde{X}^{k+1}_{v_{k+1}}]$ is a $K_{t,t}$ with parts $\tilde{X}^{k-1}_{v_k}$ and $\tilde{X}^{k+1}_{v_{k+1}}$.
\end{itemize}
If $n$ is even, we set
$$(X^1_{v_1},X^1_{v_2},\ldots,X^1_{v_{n-1}})=(\tilde{X}^1_{v_1}, \tilde{X}^1_{v_2}, \tilde{X}^3_{v_3}, \tilde{X}^3_{v_4}, \ldots, \tilde{X}^{n-3}_{v_{n-3}}, \tilde{X}^{n-3}_{v_{n-2}}, \tilde{X}^{n-1}_{v_{n-1}}),$$
and
$$(X^2_{v_1},X^2_{v_2},\ldots,X^2_{v_{n-1}})=(\tilde{X}^0_{v_1}, \tilde{X}^2_{v_2}, \tilde{X}^2_{v_3}, \tilde{X}^4_{v_4}, \tilde{X}^4_{v_5}, \ldots, \tilde{X}^{n-2}_{v_{n-2}}, \tilde{X}^{n-2}_{v_{n-1}}).$$
If $n$ is odd, let
$$(X^1_{v_1},X^1_{v_2},\ldots,X^1_{v_{n-1}})=(\tilde{X}^1_{v_1}, \tilde{X}^1_{v_2}, \tilde{X}^3_{v_3}, \tilde{X}^3_{v_4}, \ldots, \tilde{X}^{n-2}_{v_{n-2}}, \tilde{X}^{n-2}_{v_{n-1}}),$$
and
$$(X^2_{v_1},X^2_{v_2},\ldots,X^2_{v_{n-1}})=(\tilde{X}^0_{v_1}, \tilde{X}^2_{v_2}, \tilde{X}^2_{v_3},\ldots, \tilde{X}^{n-3}_{v_{n-3}}, \tilde{X}^{n-3}_{v_{n-2}}, \tilde{X}^{n-1}_{v_{n-1}}).$$
By using (a) and Proposition~\ref{prop_main-proposition}(b), it is easy to check that, for every vertex $u$ of $G'$, $(X_u^1,X_u^2)$ is a partition of $X_u$ such that the conditions (b), (c), (d), and (e) are satisfied. Since the color $x \in X_v$ was chosen arbitrarily, it follows from (a) and Proposition~\ref{prop_main-proposition}(b) that there is a partition $(X_v^1,X_v^2)$ of $X_v$ such that $|X_v^1|=|X_v^2|=t$ and the following conditions hold:
\begin{itemize}
\item If $n$ is even, then $N_H(x)=X_{v_1}^1 \cup X_{v_{n-1}}^2$ for all $x \in X_v^1$ and $N_H(x)=X_{v_1}^2 \cup X_{v_{n-1}}^1$ for all $x \in X_v^2$.
\item If $n$ is odd, then $N_H(x)=X_{v_1}^1 \cup X_{v_{n-1}}^1$ for all $x \in X_v^1$ and $N_H(x) = X_{v_1}^2 \cup X_{v_{n-1}}^2$ for all $x \in X_v^2$.
\end{itemize}
Clearly, this implies that $(G,X,H)$ is a C-configuration, and the claim is proven.
\end{proof2}
This settles Case 2. Hence, in both cases we showed that $(G,X,H)$ is a constructible configuration and the proof of the theorem is complete. 
\end{proof}

As mentioned earlier, Kim and Ozeki~\cite{KiOz17} characterized the 'bad' covers for non-DP-degree colorable graphs; many ideas of their proof are similar to ours. In our terminology, they proved the following:

\setcounter{theorem}{6}
\begin{theorem}[Kim and Ozeki, 19] \label{theorem_Kim-Ozeki}
Let $G$ be a graph and let $(G,X,H)$ be a degree-feasible configuration. Then, $G$ is not $(X,H)$-colorable if and only if for each block $B \in \mathcal{B}(G)$ there is a cover $(X^B,H^B)$ of $B$ such that the following statements hold.
\begin{itemize}
\item[\upshape (a)] For every block $B \in \mathcal{B}(G)$, the triple $(B,X^B,H^B)$ is a {\upshape K}-configuration, or a {\upshape C}-configuration.
\item[\upshape (b)] The graphs $H^B$ with $B \in \mathcal{B}(G)$ are pairwise disjoint and $H \supseteq \bigcup_{B \in \mathcal{B}(G)} H^B$.
\item[\upshape (c)] For each vertex $v \in V(G)$ it holds $X_v=\bigcup_{B \in \mathcal{B}(G), v \in V(B)}X_v^B$.
\end{itemize}
\end{theorem}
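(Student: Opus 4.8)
The plan is to obtain Kim and Ozeki's characterization as a corollary of the main result, Theorem~\ref{theorem_main-result}, combined with the block description in Proposition~\ref{prop_block-structure}. The decisive special feature is that $G$ is a graph, hence contains no hyperedge, so no block-configuration can be an E-configuration; this is exactly what reduces the three basic configuration types to the two in statement (a).

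For the forward implication I would argue as follows. Suppose $G$ is not $(X,H)$-colorable, so $(G,X,H)$ is uncolorable. By the remark following Proposition~\ref{prop_feas-config}, every uncolorable degree-feasible configuration contains a spanning subhypergraph $H'$ of $H$ with $(G,X,H')$ minimal uncolorable, and by Proposition~\ref{prop_feas-config}(b) this $(G,X,H')$ is again degree-feasible. Theorem~\ref{theorem_main-result} then makes $(G,X,H')$ constructible, and Proposition~\ref{prop_block-structure} yields, for each block $B\in\mathcal{B}(G)$, a cover $(X^B,H^B)$ such that $(B,X^B,H^B)$ is a K-, C-, or E-configuration, the $H^B$ are pairwise disjoint, $H'=\bigcup_{B}H^B$, and $X_v=\bigcup_{B\ni v}X_v^B$. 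Since $G$ is a graph the E-case is vacuous, which gives (a); since $H'=\bigcup_B H^B$ is a subhypergraph of $H$ we get $H\supseteq\bigcup_B H^B$, which is (b); and (c) is immediate because the color sets $X_v$ are unchanged when passing from $H$ to its subhypergraph $H'$.

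For the converse I would start from the given block covers $(X^B,H^B)$ satisfying (a)--(c) and set $H'=\bigcup_B H^B$. Condition (c) forces $V(H')=\bigcup_v X_v=V(H)$, so by (b) the hypergraph $H'$ is a \emph{spanning} subhypergraph of $H$. The core step is to show that $(G,X,H')$ is constructible---essentially the converse of Proposition~\ref{prop_block-structure}---which I would prove by induction on $|\mathcal{B}(G)|$. In the inductive step one selects an end-block $B_0$ of $G$ with unique separating vertex $v^*$ and exhibits $(G,X,H')$ as obtained by merging (disjoint copies of) the K- or C-configuration $(B_0,X^{B_0},H^{B_0})$ and the configuration on $G^2:=G-(V(B_0)\setminus\{v^*\})$, the latter being constructible by the induction hypothesis. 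One checks that the merge produces the correct data: $X_{v^*}=X^{B_0}_{v^*}\cup\bigcup_{B\ni v^*,\,B\neq B_0}X^B_{v^*}$ and $H'=H^{B_0}\cup\bigcup_{B\neq B_0}H^B$, matching the definition of the merging operation. Since the class of constructible configurations is closed under merging, $(G,X,H')$ is constructible; by Theorem~\ref{theorem_main-result} it is minimal uncolorable, in particular uncolorable. Finally, as $H'$ is a spanning subhypergraph of $H$, the contrapositive of Proposition~\ref{prop_feas-config}(b) shows that $(G,X,H)$ is uncolorable, i.e. $G$ is not $(X,H)$-colorable.

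The step I expect to be the main obstacle is this inductive assembly in the converse: one must check that the block--cut-tree decomposition of $G$ is faithfully reproduced by iterated application of the merging operation, so that the purely combinatorial gluing data in (b) and (c) coincide with the color sets and edge sets that merging actually produces. Once this bookkeeping is in place the remainder is a direct appeal to Theorem~\ref{theorem_main-result}, Proposition~\ref{prop_block-structure}, and the spanning-subhypergraph monotonicity of colorability recorded in Proposition~\ref{prop_feas-config}(b).
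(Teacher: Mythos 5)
Your derivation is correct, but it is not ``the paper's proof,'' because the paper contains no proof of Theorem~\ref{theorem_Kim-Ozeki}: the statement is quoted from Kim and Ozeki~\cite{KiOz17}, and the author's remark after it runs the dependency in the \emph{opposite} direction from yours, noting that Kim--Ozeki's result could have been used to omit Claims~3 and~4 in the proof of Theorem~\ref{theorem_main-result}, but that those claims were proved from scratch to keep the paper self-contained. Your argument exploits exactly that self-containedness: since Theorem~\ref{theorem_main-result} is proved without appeal to Kim--Ozeki, deducing the latter from it is legitimate and non-circular. Both directions of your proof go through: forward, pass to a spanning subhypergraph $H'$ with $(G,X,H')$ minimal uncolorable (degree-feasibility is preserved by Proposition~\ref{prop_feas-config}(b)), apply Theorem~\ref{theorem_main-result} and Proposition~\ref{prop_block-structure}, and note $H\supseteq H'=\bigcup_B H^B$; conversely, reassemble the block covers into $H'=\bigcup_B H^B$, which conditions (b) and (c) force to be a spanning subhypergraph of $H$, prove constructibility of $(G,X,H')$ by induction over the block tree using the merging operation (your identification of this unstated converse of Proposition~\ref{prop_block-structure} as the main bookkeeping step is accurate, and the bookkeeping does close: the restricted data on $G-(V(B_0)\setminus\{v^*\})$ is again a cover whose blocks satisfy (a)--(c), and merging disjoint copies reproduces exactly the color sets and edge sets demanded by (b) and (c)), then finish with Theorem~\ref{theorem_main-result} and the monotonicity in Proposition~\ref{prop_feas-config}(b). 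What your route buys is a short, purely formal deduction of the graph case from the hypergraph machinery; what it costs is that it says nothing new about graphs, whereas Kim and Ozeki's own argument is a direct one.

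One small correction: your opening claim that, since $G$ has no hyperedges, ``no block-configuration can be an E-configuration'' is literally false. The notation $G=<e>$ only requires $|i_G(e)|\geq 2$, so a bridge $K_2$ whose two color sets are singletons, with $H\cong K_2$, \emph{is} an E-configuration. The reason statement (a) nevertheless holds is that such a configuration coincides with a K-configuration with $t=1$ and $n=2$ (note $K_{(2,1)}=K_2$), so every E-configuration arising over a block of a graph is simultaneously a K-configuration. This is a one-line repair of your justification, not a gap in the argument.
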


In particular, if $G$ itself is a block it follows from their theorem that $(G,X,H)$ is either a K-, or a C-configuration. Thus, by using their result we could omit Claim~3 and 4 in the proof of Theorem~\ref{theorem_main-result}. However, for the reader's convenience, we decided to display the entire proof so that the reader gets a complete presentation how both the characterization of the 'bad' blocks as well as the corresponding 'bad' covers in the hypergraph and the graph case can be done.

\subsection{A Brooks' Type Theorem for $\mathbf{\chi}_{\text{DP}}$}

The next two corollaries are direct consequences of Theorem~\ref{theorem_main-result} and Proposition~\ref{prop_block-structure}.

\begin{corollary}
Let $(G,X,H)$ be a degree-feasible configuration. If $(G,X,H)$ is minimal uncolorable, then for each block $B \in \mathcal{B}(G)$ there is a uniquely determined cover $(X^B,H^B)$ of $B$ such that the following statements hold.
\begin{itemize}
\item[\upshape (a)] For every block $B \in \mathcal{B}(G)$, the triple $(B,X^B,H^B)$ is a {\upshape K}-configuration, a {\upshape C}-configuration, or an {\upshape E}-configuration.
\item[\upshape (b)] The hypergraphs $H^B$ with $B \in \mathcal{B}(G)$ are pairwise disjoint and $H= \bigcup_{B \in \mathcal{B}(G)} H^B$.
\item[\upshape (c)] For each vertex $v \in V(G)$ it holds $X_v=\bigcup_{B \in \mathcal{B}(G), v \in V(B)}X_v^B$.
\end{itemize}
\end{corollary}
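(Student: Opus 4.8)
The plan is to derive this corollary as an almost immediate consequence of Theorem~\ref{theorem_main-result} together with Proposition~\ref{prop_block-structure}, exactly as the sentence preceding it advertises. Since $(G,X,H)$ is a minimal uncolorable degree-feasible configuration, Theorem~\ref{theorem_main-result} tells us that $(G,X,H)$ is constructible. This is the crucial bridge: the corollary asks for a statement about the block decomposition of $H$, and constructibility is precisely the hypothesis under which Proposition~\ref{prop_block-structure} delivers such a decomposition. So the first and essentially only substantive step is to invoke the theorem to pass from ``minimal uncolorable'' to ``constructible.''

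Once constructibility is in hand, I would simply apply Proposition~\ref{prop_block-structure} to $(G,X,H)$. That proposition asserts, for a constructible configuration, the existence of a uniquely determined cover $(X^B,H^B)$ of each block $B$ satisfying three conditions, and these three conditions are verbatim the three statements (a), (b), (c) claimed in the corollary: each block-configuration $(B,X^B,H^B)$ is a K-, C-, or E-configuration; the $H^B$ are pairwise disjoint with $H=\bigcup_{B}H^B$; and $X_v=\bigcup_{B\ni v}X_v^B$ for every vertex $v$. Thus the proof is a two-line chaining of the two earlier results, with no new combinatorial content to supply.

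There is genuinely no hard step here; the only thing to be careful about is the \emph{direction} of the implication being used. Theorem~\ref{theorem_main-result} is an ``if and only if,'' and the corollary's hypothesis ``$(G,X,H)$ is minimal uncolorable'' is the side from which we extract ``constructible.'' After that, Proposition~\ref{prop_block-structure} is stated with ``constructible'' as its hypothesis, so the inputs line up without any additional verification. One might also remark, for the reader's orientation, that the corollary is strictly the forward half of the earlier results repackaged into a clean structural statement about the blocks of $H$; it carries over the uniqueness of the block covers directly from Proposition~\ref{prop_block-structure}, so no separate uniqueness argument is needed.

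In short, I would write: by Theorem~\ref{theorem_main-result}, the minimal uncolorable degree-feasible configuration $(G,X,H)$ is constructible, and then the three assertions (a)--(c) are precisely the conclusion of Proposition~\ref{prop_block-structure} applied to this constructible configuration. The main (and only) obstacle worth flagging is ensuring the citation points to the correct half of the biconditional in Theorem~\ref{theorem_main-result}, since everything else is a direct transcription of the proposition's statement.
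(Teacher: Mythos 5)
Your proof is correct and is exactly the argument the paper intends: the corollary is stated there as a direct consequence of Theorem~\ref{theorem_main-result} (extracting constructibility from minimal uncolorability) followed by Proposition~\ref{prop_block-structure}, whose conclusion is verbatim the statements (a)--(c), uniqueness included.
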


\begin{corollary}\label{cor_DP-hyper}
A connected hypergraph $G$ is not {\upshape DP}-degree-colorable if and only if each block of $G$ is a {\upshape DP}-hyperbrick.
\end{corollary}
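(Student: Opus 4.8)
The plan is to read the statement as a direct translation of Theorem~\ref{theorem_main-result} into the language of DP-degree colorability, so that almost all of the real work has already been carried out. First I would unwind the definitions: by definition $G$ fails to be DP-degree colorable precisely when there exists a cover $(X,H)$ with $|X_v|\ge d_G(v)$ for all $v\in V(G)$ for which $G$ is not $(X,H)$-colorable; equivalently, precisely when there exists an \emph{uncolorable degree-feasible configuration} $(G,X,H)$. Since $G$ is connected, this is exactly the setting of Theorem~\ref{theorem_main-result}, so the task reduces to converting ``some uncolorable degree-feasible cover exists'' into the block condition.

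For the forward direction, suppose $G$ is not DP-degree colorable and fix an uncolorable degree-feasible configuration $(G,X,H)$. Following the discussion after Proposition~\ref{prop_feas-config}, I would pass to a spanning subhypergraph $H'\subseteq H$ for which $(G,X,H')$ is \emph{minimal} uncolorable (the case $|G|=1$ being immediate, since then $X=\varnothing$, $H=\varnothing$ is already a K-configuration); by Proposition~\ref{prop_feas-config}(b) this $(G,X,H')$ is still degree-feasible. Theorem~\ref{theorem_main-result} then says $(G,X,H')$ is constructible, and the remark following the definition of constructible configurations (equivalently Proposition~\ref{prop_block-structure}(a)) guarantees that each block of $G$ is a $tK_n$, a $tC_n$, or of the form $<e>$ — that is, a DP-hyperbrick.

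For the converse, suppose every block of $G$ is a DP-hyperbrick. My strategy is to manufacture a single constructible configuration whose underlying hypergraph is $G$ and then invoke Theorem~\ref{theorem_main-result}. On each block $B$ I place one of the three basic configurations: a K-configuration when $B=tK_n$ (this also covers $B=tC_3=tK_3$), a C-configuration when $B=tC_n$ with $n\ge 4$, and an E-configuration when $B=<e>$; after relabelling colors these block-configurations may be taken pairwise disjoint. Walking along the block-cut tree of $G$ and repeatedly applying the merging operation at the separating vertices (an induction on the number of blocks) then assembles a constructible configuration $(G,X,H)$; degree-feasibility is preserved at each merge because the degrees of the identified vertices add. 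By Theorem~\ref{theorem_main-result} this $(G,X,H)$ is minimal uncolorable, hence in particular uncolorable and degree-feasible, so $G$ is not DP-degree colorable.

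The argument is essentially bookkeeping once Theorem~\ref{theorem_main-result} is in hand, and I expect no genuine obstacle. The one point requiring care is the converse construction: one must check that every DP-hyperbrick really does admit one of the three basic configurations — the only non-obvious case is $tC_3$, resolved by noting $C_3=K_3$ and using a K-configuration — and that the merging operation can be iterated coherently over the entire block structure, which is exactly the closure property built into the definition of constructible configurations (and is the reverse of Proposition~\ref{prop_block-structure}).
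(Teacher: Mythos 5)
Your proposal is correct and matches the paper's intended argument: the paper derives this corollary exactly as a direct consequence of Theorem~\ref{theorem_main-result} together with Proposition~\ref{prop_block-structure} (passing to a minimal uncolorable spanning subhypergraph in one direction, assembling block configurations via the merging operation in the other), which is precisely what you spell out. Your attention to the $tC_3=tK_3$ case in the converse is the right detail to flag, since the C-configurations are only defined for $n\geq 4$.
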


To conclude this paper, we are now able to give a Brooks-type theorem for DP-colorings of hypergraphs. For graphs, the theorem was proven already by Bernshteyn, Kostochka, and Pron \cite{BeKoPro17}.

\begin{theorem}
Let $G$ be a connected hypergraph. Then, $\chi_{\rm{DP}}(G) \leq \Delta(G) + 1$ and equality holds if and only if $G$ is a {\upshape DP}-hyperbrick.
\end{theorem}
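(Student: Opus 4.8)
The plan is to derive this Brooks-type theorem as a direct consequence of Corollary~\ref{cor_DP-hyper} together with the chain of inequalities \eqref{eq_chi-inequalities}. First I would establish the upper bound $\chi_{\rm{DP}}(G) \leq \Delta(G) + 1$, which is immediate: the sequential coloring argument preceding \eqref{eq_chi-inequalities} shows that $\chi_{\rm{DP}}(G) \leq \col(G) \leq \Delta(G)+1$ for any hypergraph. So the content of the theorem lies entirely in characterizing the equality case.

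For the equality case, I would prove the two directions separately. For the easy direction, suppose $G$ is a DP-hyperbrick, that is, $G$ is a single block equal to a $tK_n$, a $tC_n$, or of the form $<e>$. In each case one can exhibit a degree-feasible cover $(X,H)$ with $|X_v| = d_G(v) = \Delta(G)$ for all $v$ that has no independent transversal: indeed, each such $G$ carries a K-configuration, C-configuration, or E-configuration respectively (these were verified to be uncolorable when the three configuration types were introduced), and since every vertex of a DP-hyperbrick has degree exactly $\Delta(G)$, this cover witnesses $\chi_{\rm{DP}}(G) > \Delta(G)$, forcing $\chi_{\rm{DP}}(G) = \Delta(G)+1$.

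For the harder direction, I would argue the contrapositive: if $G$ is \emph{not} a DP-hyperbrick, then $\chi_{\rm{DP}}(G) \leq \Delta(G)$. The key subtlety here is that Corollary~\ref{cor_DP-hyper} concerns DP-\emph{degree}-colorability, where each list satisfies $|X_v| \geq d_G(v)$, whereas the DP-chromatic number uses uniform lists of size $\Delta(G)$. If $G$ is connected but not a single DP-hyperbrick, then $G$ has more than one block, and by Corollary~\ref{cor_DP-hyper} there is a block that is not a DP-hyperbrick; hence $G$ is DP-degree-colorable. Since $|X_v| \geq \Delta(G) \geq d_G(v)$ for every $v$ under any cover with uniform lists of size $\Delta(G)$, any such cover is degree-feasible, so DP-degree-colorability yields an independent transversal, giving $\chi_{\rm{DP}}(G) \leq \Delta(G)$.

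The main obstacle I anticipate is handling the boundary between ``$G$ is a single DP-hyperbrick'' and ``$G$ has a non-hyperbrick block'' cleanly, since Corollary~\ref{cor_DP-hyper} is phrased in terms of \emph{every} block being a DP-hyperbrick rather than $G$ itself being one. I would resolve this by noting that a connected hypergraph all of whose blocks are DP-hyperbricks but which is \emph{not itself} a single DP-hyperbrick must have at least two blocks, and then verifying that in this multi-block case the degree-feasible cover that Corollary~\ref{cor_DP-hyper} produces still has a vertex (a separating vertex) whose degree is the sum of its block-degrees and hence may exceed $\Delta(G)$ only if... — more carefully, one checks that when $G$ has a separating vertex, the uniform-$\Delta(G)$ cover is \emph{strictly} degree-feasible at that vertex, so the uncolorable degree-feasible configurations of Theorem~\ref{theorem_main-result} (which require $|X_v| = d_G(v)$ at every vertex by Proposition~\ref{prop_main-proposition}(a)) cannot arise. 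This is precisely where the equality $|X_v| = d_G(v)$ in Proposition~\ref{prop_main-proposition}(a) does the work, and stating the argument through that lens makes the separating-vertex case transparent.
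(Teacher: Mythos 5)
Your upper bound, your easy direction (a K-, C-, or E-configuration on a DP-hyperbrick is an uncolorable cover with $|X_v| = d_G(v) = \Delta(G)$ everywhere), and your first sub-case of the hard direction (some block is not a DP-hyperbrick, so Corollary~\ref{cor_DP-hyper} gives DP-degree-colorability, which covers uniform lists of size $\Delta(G) \geq d_G(v)$) are all correct, and your route is essentially the paper's own argument run in contrapositive form: the paper assumes $\chi_{\rm DP}(G) = \Delta(G)+1$, passes to a minimal uncolorable spanning configuration, and uses Proposition~\ref{prop_main-proposition}(a) plus Theorem~\ref{theorem_main-result} to force $G$ regular with all blocks DP-hyperbricks, hence a single block.

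The gap is in your handling of the remaining case, where $G$ has at least two blocks and \emph{every} block is a DP-hyperbrick (so Corollary~\ref{cor_DP-hyper} is of no use, since $G$ is then \emph{not} DP-degree-colorable). You claim that ``when $G$ has a separating vertex, the uniform-$\Delta(G)$ cover is strictly degree-feasible at that vertex,'' i.e.\ $d_G(v^*) < \Delta(G)$ for a separating vertex $v^*$. This is false, and in fact inverted: in this case $\Delta(G)$ is attained \emph{only} at separating vertices. Concretely, take $G$ to be two triangles sharing a vertex $v^*$; then $d_G(v^*) = 4 = \Delta(G)$, so the uniform-$\Delta(G)$ cover is tight, not strict, at $v^*$. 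The strict inequality you need lives at the \emph{non-separating} vertices, and proving it requires the regularity of the blocks: if $B$ is an end-block with separating vertex $v^*$ and $u \in V(B)\setminus\{v^*\}$, then $B$, being a DP-hyperbrick, is regular, so $d_G(u) = d_B(u) = d_B(v^*) < d_G(v^*) \leq \Delta(G)$. (For arbitrary hypergraphs with a cut vertex no vertex of degree below $\Delta(G)$ need exist, so block regularity is doing real work here.) With this correction your argument closes exactly as intended: if a cover with $|X_v| \geq \Delta(G)$ for all $v$ were uncolorable, then Proposition~\ref{prop_main-proposition}(a) would force $|X_v| = d_G(v)$, hence $d_G(v) = \Delta(G)$, at \emph{every} vertex, i.e.\ $G$ regular, which the end-block computation above rules out when $G$ has two or more blocks, all regular.

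A smaller point: the opening sentence of your hard direction (``then $G$ has more than one block, and by Corollary~\ref{cor_DP-hyper} there is a block that is not a DP-hyperbrick'') asserts two implications that are both false — a non-hyperbrick $G$ may consist of a single block, and a multi-block $G$ may have all blocks hyperbricks (the two-triangle example again). Your final paragraph shows you noticed this; the case split should simply read: either some block of $G$ is not a DP-hyperbrick, or all blocks are DP-hyperbricks and there are at least two of them.
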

\begin{proof}
It follows from \eqref{eq_chi-inequalities} that $\chi_{\text{DP}}(G) \leq \Delta(G) + 1$ always holds. Moreover, it is obvious that any DP-hyperbrick $G$ satisfies $\chi_{\text{DP}}(G) = \Delta(G) + 1$, take any K-, C-, or E-configuration. Now assume that $\chi_{\text{DP}}(G) = \Delta(G) + 1$. Then, there is a cover $(X,H)$ of $G$ such that $|X_v| \geq \Delta(G)$ for all $v \in V(G)$ and $G$ is not $(X,H)$-colorable. Hence, $(G,X,H)$ is an uncolorable degree-feasible configuration and there is a spanning subhypergraph $H'$ of $H$ such that $(G,X,H')$ is minimal uncolorable. Then, $G$ is regular (by Proposition~\ref{prop_main-proposition}(a)) and each block of $G$ is a DP-hyperbrick (by Theorem~\ref{theorem_main-result}). As any DP-hyperbrick is regular, this implies that $G$ has only one block and, therefore, is a DP-hyperbrick. This completes the proof.
\end{proof}

\section{Concluding Remarks}
It is often of interest to determine the complexity of specific coloring problems. Clearly, a graph has chromatic number 2 if and only if it is bipartite. By König's Theorem this is equivalent to having no cycles of odd length, which can easily be checked in polynomial time. However, Lov\'asz \cite{Lov73} showed that for a fixed integer $k \geq 3$ it is an \NP-complete  decision problem to decide whether a graph admits a $k$-coloring.  Moreover, he proved that it is \NP-complete to decide wether a hypergraph is bipartite or not. This implies in particular that determining the chromatic number of a hypergraph is \NP-hard. Regarding list-colorings, Erd\H os, Rubin, and Taylor \cite{ErdRubTay79} and independently Vizing \cite{Vizing} showed that one can check in polynomial time if a graph admits an $L$-coloring provided that each vertex gets assigned a list of at most 2 colors. Furthermore, Erd\H os, Rubin, and Taylor \cite{ErdRubTay79} observed that, given a fixed integer $k \geq 3$, the problem if a graph is $k$-list colorable is $\Pi^p_2$-complete whereas $\Pi^p_2$ is a complexity class in the polynomial hierarchy containing both  \NP~and co\NP. Since DP-colorings are an extension of (list-)colorings of hypergraphs we conclude that, given a cover $(X,H)$ of a hypergraph $G$, it is  \NP -hard to decide if $G$ admits an $(X,H)$ coloring. Nevertheless, it might be an interesting topic to examine conditions under which a graph $G$ admits an $(X,H)$-coloring for some cover $(X,H)$. In order to get some ideas we recommend taking a look at a survey by Golovach, Johnson, Paulusma, and Song \cite{Gol17} that analyzes the complexity of coloring problems with respect to some forbidden subgraphs. Regarding list-colorings of (simple) hypergraphs with lists containing at least degree many colors it is easy to deduce a polynomial time algorithm from the proof of Kostochka, Stiebitz and Wirt \cite{KoStiWi96}  that, given a simple hypergraph $G$ and a list-assignment $L$ with $|L(v)| \geq d_G(v)$ for all $v \in V(H)$, either finds an $L$-coloring of $G$ or returns a 'bad' block. A similar algorithm for DP-degree colorability can be deduced from our proof.

\end{document}